\theoremstyle{plain}
\newtheorem{theorem}{Theorem}
\newtheorem{proposition}[theorem]{Proposition}
\newtheorem{conjecture}[theorem]{Conjecture}
\newtheorem{example}[theorem]{Example}
\newtheorem{remark}[theorem]{Remark}
\newcommand{\grgr}[1]{{\textcolor{blue}
{[GRG: #1\marginpar{\textcolor{blue}{\bf \hfil $\bigstar$}}]}}}
\newcommand{\mc}[1]{\mathcal{#1}}
\newcommand\es{\varnothing}
\newcommand\ol{\overline}
\newcommand\EE{{\mathbb E}}
\newcommand\HH{{\mathbb H}}
\newcommand\RR{{\mathbb R}}
\newcommand\ZZ{{\mathbb Z}}
\newcommand\PP{{\mathbb P}}
\newcommand\pcH{\pc^{\HH}}
\newcommand{\pp}{\mathrm{P}_p}
\newcommand\om{\omega}
\newcommand\eps{\epsilon}
\newcommand\qq{\qquad}
\newcommand\lam{\lambda}
\newcommand\lc{\lam_{\text{\rm c}}}
\newcommand\olc{\ol\lam_{\text{\rm c}}}
\newcommand\q{\quad}
\newcommand\resp{respectively}
\newcommand\oo{\infty}
\newcommand\sD{{\mathcal D}}
\newcommand\sT{{\mathcal T}}
\newcommand\sS{{\mathcal S}}
\newcommand\la{\lambda}
\newcommand\La{\Lambda}
\newcommand\Lazz{\La_{o}}
\newcommand\Laoz{\La_{e_1}}
\newcommand\Ppp{\PP_{p,\pi}}
\newcommand\pcsite{\pc^{\text{\rm site}}}
\newcommand\pcbond{\pc^{\text{\rm bond}}}
\newcommand\Om{\Omega}
\newcommand\id{{\bf 1}}
\newcommand\pc{p_{\text{\rm c}}}
\newcommand\modelone{one-choice model}
\newcommand\modeltwo{independent model}
\newcommand\pd{\partial}
\newcommand\PPpm{\PP_{p,\mu}}
\newcommand\PPpl{\PP_{p,\la}}
\newcounter{mycount1}\newcounter{mycount2}\newcounter{mycount3}\newcounter{mycount}
\newenvironment{romlist}{\begin{list}{\rm(\roman{mycount1})}%
   {\usecounter{mycount1}\labelwidth=1cm\itemsep 0pt}}{\end{list}}
\newenvironment{letlist}{\begin{list}{\rm(\alph{mycount3})}%
   {\usecounter{mycount3}\labelwidth=1cm\itemsep 0pt}}{\end{list}}
\newenvironment{Alist}{\begin{list}{\rm\MakeUppercase{\alph{mycount}}.}%
   {\usecounter{mycount}\labelwidth=1cm\itemsep 0pt}}{\end{list}}
\newcommand{\M}[1]{#1}
\numberwithin{equation}{section}
\numberwithin{theorem}{section}
\numberwithin{figure}{section}
\DeclareMathOperator{\Binomial}{binom}
\DeclareMathOperator{\Geometric}{geom}
\DeclareMathOperator{\Corr}{corr}
\title{Alignment percolation}
\author[N. R. Beaton, G. R. Grimmett, M. Holmes]{Nicholas R. Beaton$^1$}
\author{Geoffrey R.\ Grimmett$^{1,2}$}
\address{$^1$School of Mathematics \&\ Statistics, The University of Melbourne, 
Parkville, VIC 3010, Australia}
\address{$^2$ Centre for Mathematical Sciences, University of Cambridge, CB2 0WB, United Kingdom} 
\email{nrbeaton@unimelb.edu.au}
\email{grg@statslab.cam.ac.uk}
\email{holmes.m@unimelb.edu.au}
\author{Mark Holmes$^1$}
\begin{document}

\begin{abstract}
The existence (or not) of infinite clusters is explored for two stochastic models
of intersecting line segments in $d \ge 2$ dimensions. Salient features of the phase diagram are established in each case.
The models are based on site percolation on $\ZZ^d$ with parameter $p\in (0,1]$.  
For each occupied site $v$, and for each of the $2d$ possible coordinate directions, 
declare the entire line segment from $v$ to the next occupied site in the given direction to be 
either blue or not blue according to a given 
stochastic rule.  In the \lq\modelone', each occupied site declares one of its $2d$  incident segments to be blue.
In the \lq\modeltwo', the states of different line segments are independent.  
\end{abstract}

\date{19 August 2019, revised 2 July 2020} %\today}

\keywords{Percolation, \modelone, \modeltwo}
\subjclass[2010]{60K35}
\maketitle

\section{The two models}\label{sec:int}
Percolation theory is concerned with the existence of infinite clusters in stochastic geometric models.
In the classical percolation model, sites (or bonds) of $\ZZ^d$ are declared \emph{occupied} with probability
$p$, independently of one another, and the basic question is to 
understand the geometry of the occupied subgraph for different ranges of values of $p$
(see \cite{G99} for a general account of percolation). A number of models of 
\emph{dependent}  percolation have been studied
in various contexts including statistical physics, social networks, and stochastic geometry.
One area of study of mathematical interest has been the geometry of
disks and line segments arising in Poisson processes in $\RR^d$; see, for example,
\cite{del, HM,H16}. Motivated in part by such Poissonian systems, we study here two percolation systems of 
random line segments on the hypercubic lattice $\ZZ^d$ with $d \ge 2$. Each is based on
site percolation on $\ZZ^d$.

These two processes are called the \lq\modelone' and the \lq\modeltwo', \resp, 
with the difference lying in 
the manner in which line segments within the site percolation model are declared to be active (or \lq blue').
In the  \modelone, there is dependence between neighbouring segments, whereas they are
conditionally independent in the \modeltwo. We describe the two models next.

Let $\Om_V=\{0,1\}^{\ZZ^d}$ be the state space of site percolation on
the hypercubic lattice $\ZZ^d$ with $d \ge 2$, 
let $\mc{F}_V$ denote the $\sigma$-field generated by the cylinder sets 
(i.e.~the states of finitely many sites),
and let $\pp$ be product measure on $(\Om_V,\mc{F}_V)$ with 
density $p\in(0,1]$. 
For $\om=(\om_v:v \in \ZZ^d)\in \Om_V$, a vertex $v\in\ZZ^d$ is called \emph{occupied} if
$\om_v=1$, and \emph{unoccupied} otherwise, and we 
write $\eta(\om)=\{v\in\ZZ^d: \om_v=1\}$
for the set of occupied vertices. 
We construct a random 
subgraph of $\ZZ^d$ as follows. First, let $\om\in\Om_V$ be sampled according
to $\pp$. 
An unordered pair of distinct vertices $v_1,v_2\in\eta(\om)$ is called \emph{feasible} if
\begin{romlist}
\item  $v_1$ and $v_2$ differ in exactly one coordinate, 
and 
\item  on the straight line-segment of $\ZZ^d$ joining $v_1$ and $v_2$,
$v_1$ and $v_2$ are the only occupied vertices. 
\end{romlist}
Each feasible pair may be considered as the straight 
line-segment (or, simply, \emph{segment}) of $\ZZ^d$ joining them.

Let $F(\om)$ be the set of all 
feasible pairs of occupied vertices of $\om$, and let  $F_v(\omega)$ denote the set of feasible pairs containing $v$.   
We will declare some subset $S(\omega) \subseteq F(\omega)$ to be \emph{blue}. 
An edge $e$ in the edge set $\EE^d$ of $\ZZ^d$ is called 
\emph{blue} if it lies in a blue segment, and a site is \emph{blue} if it is incident to one or more blue edges.  
We are interested in whether or not there exists an infinite cluster of blue edges.  The relevant state space is then $\Om_E=\{\text{blue, not blue}\}^{\EE^d}$, and $\mc{F}_E$ denotes the $\sigma$-field generated by the cylinder sets.

Roughly speaking, the parameter $p$ controls how much the set $F$ differs from the original lattice.  When $p$ is large, most sites are occupied, and many feasible pairs are simply edges of $\ZZ^d$; indeed, one obtains the entire lattice in the case
$p=1$.  When $p$ is small, segments tend to be long and have many other segments crossing them.

We will work with two specific stochastic constructions of the blue segments.  
We write $o=(0,0,\dots,0)$ for the origin of $\ZZ^d$, and $e_i$ for the unit vector
	in the direction of increasing $i$th coordinate. An edge with endvertices $u$, $v$ is
	written $(u,v)$.  Our first model is called the \modelone.

\begin{example}[The \modelone]
	\label{ex:compass}
	Let $\om\in\Om_V$.   
	Independently for each $v \in \eta(\om)$, choose a segment (say $f_v$) uniformly at random from $F_v(\om)$ 
	and declare it to be \emph{green}; in this case, we say that \emph{$v$ has declared $f_v$ to be green} and that the direction from $v$ along $f_v$ is the \emph{chosen direction} of $v$.  
	A segment $e$ with endvertices $u$, $v$ is declared
	\emph{blue} if and only if at least one of $u$ or $v$ has declared it green.
\end{example}	
	Figure \ref{fig:compass} shows simulations of the set of blue edges of $\ZZ^2$ in the \modelone\ with $p=0.8$ and $p=0.4$.
	\begin{figure}
	\centering
	\begin{subfigure}{0.48\textwidth}
	\includegraphics[width=\textwidth]{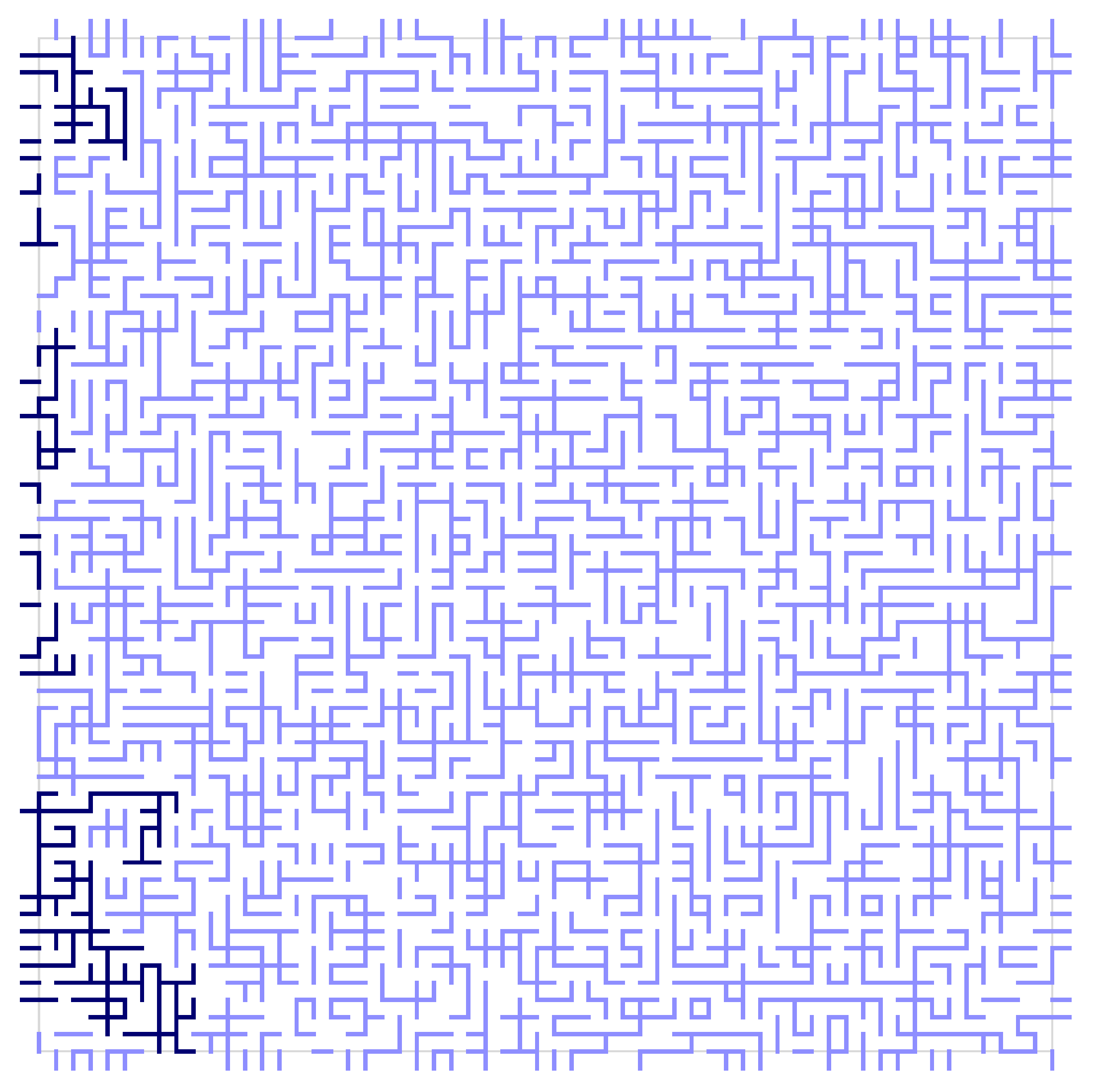}
	\end{subfigure}
	\hspace{0.2cm}
	\begin{subfigure}{0.48\textwidth}
	\includegraphics[width=\textwidth]{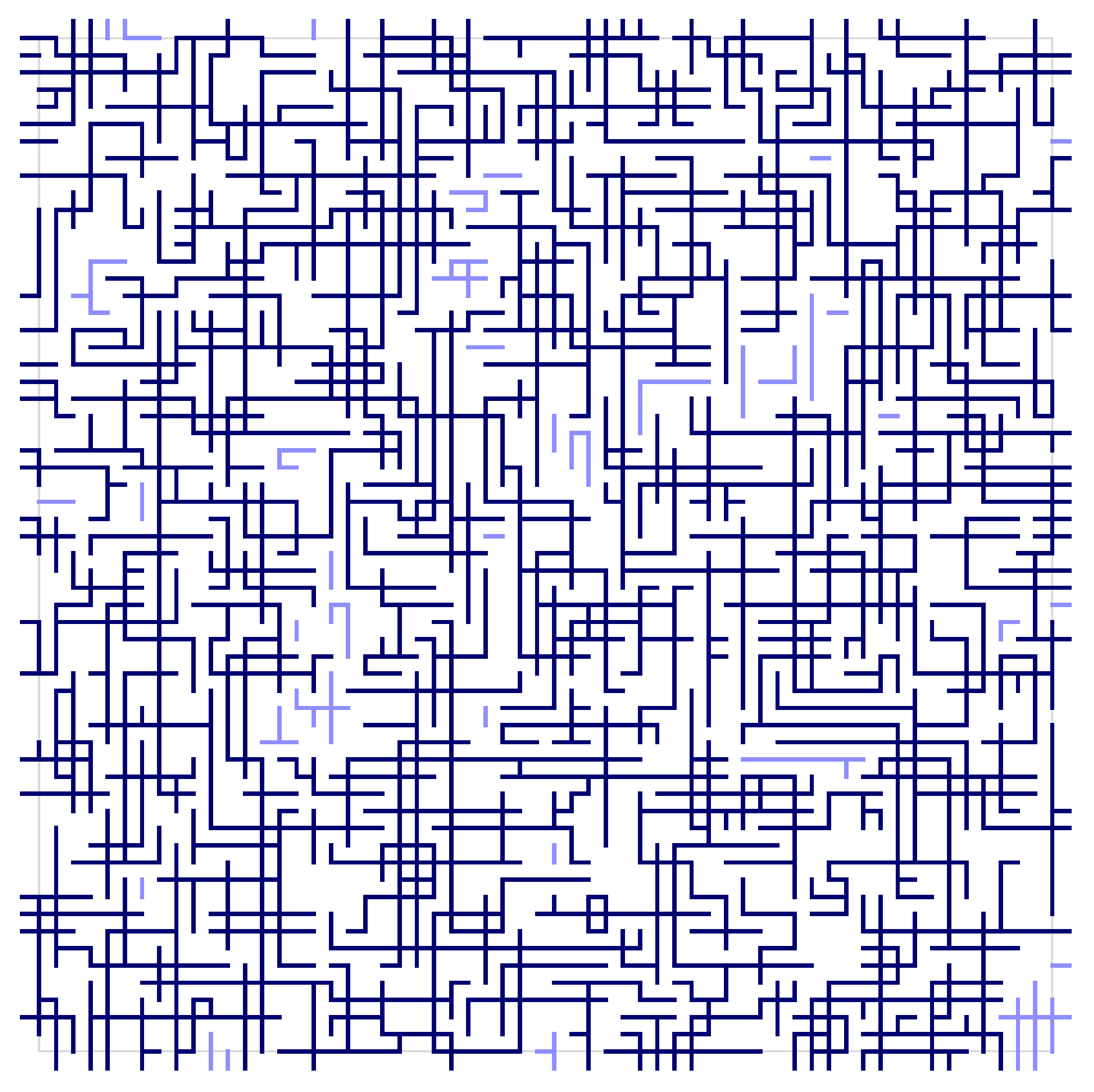}
	\end{subfigure}

		\caption{A $60\times60$ section of $\ZZ^2$ with blue edges in the \modelone\
		of  Example~\ref{ex:compass} indicated, with $p=0.8$ (left) and $p=0.4$ (right). Segments which cross the boundary of the box have been cut off for clarity. Segments which cross the left boundary, and those connected to them by blue edges, are coloured darker.}
		\label{fig:compass}
	\end{figure}

There is no apparent stochastic monotonicity in the set of blue sites as a function of $p$ for the one-choice model.  Elementary calculations show that any given edge $e$ of $\ZZ^d$ is blue with probability 
\begin{equation}
\la:= 1-\left(1-\frac{1}{2d}\right)^2,\label{lambda_def_choice}
\end{equation}
independently of $p$.  The probability that a given vertex is incident to some blue edge of $\ZZ^d$ is $p +(1-p)\bigl\{1-(1-\lambda)^d\bigr\}$, which is increasing in $p$.  On the other hand, the probability that the edges $(o,e_1)$ and $(o,-e_1)$ are both blue (where $o$ is the origin) is $1-(1+p/d)(1-\lam)$, which is decreasing in $p$.  In other words, while the frequency of blue edges does not change with $p$, blue edges tend to be more aligned as we decrease $p$.  Similarly the probability that $(o,e_1)$ and $(o,e_2)$ are both blue is $\lam^2-p(2d-1)^2/(2d)^4$, which is decreasing in $p$.

We refer to the second model as the \modeltwo.
\begin{example}[The \modeltwo]\label{ex:indep}
Let $\om\in\Om_V$ and $\la\in[0,1]$.
Independently for each segment in $F(\om)$, declare the segment to be blue with probability $\lambda$.	
	\end{example}	
\begin{figure}
\centering
	\begin{subfigure}{0.48\textwidth}
	\includegraphics[width=\textwidth]{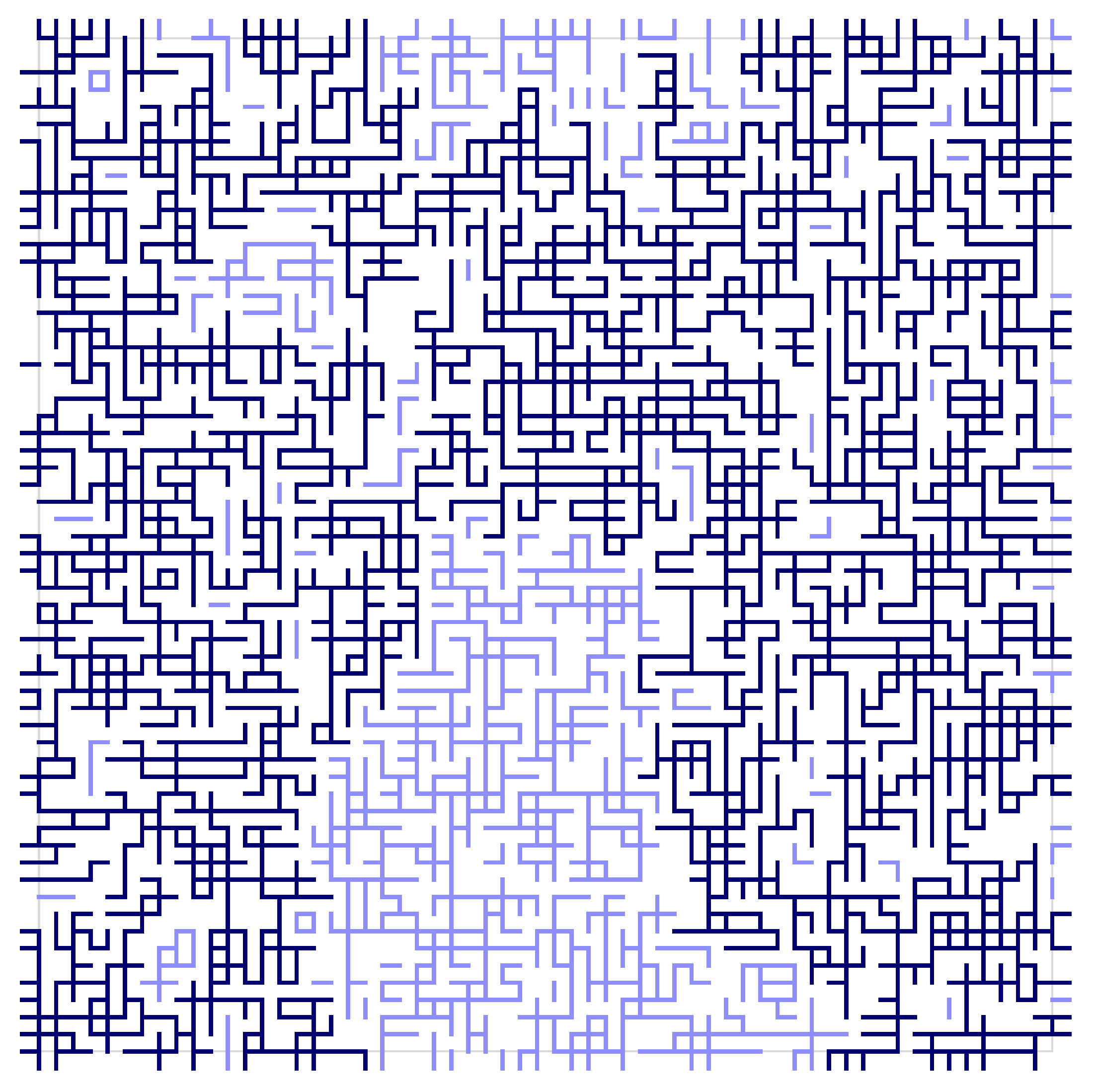}
	\end{subfigure}
	\hspace{0.2cm}
	\begin{subfigure}{0.48\textwidth}
	\includegraphics[width=\textwidth]{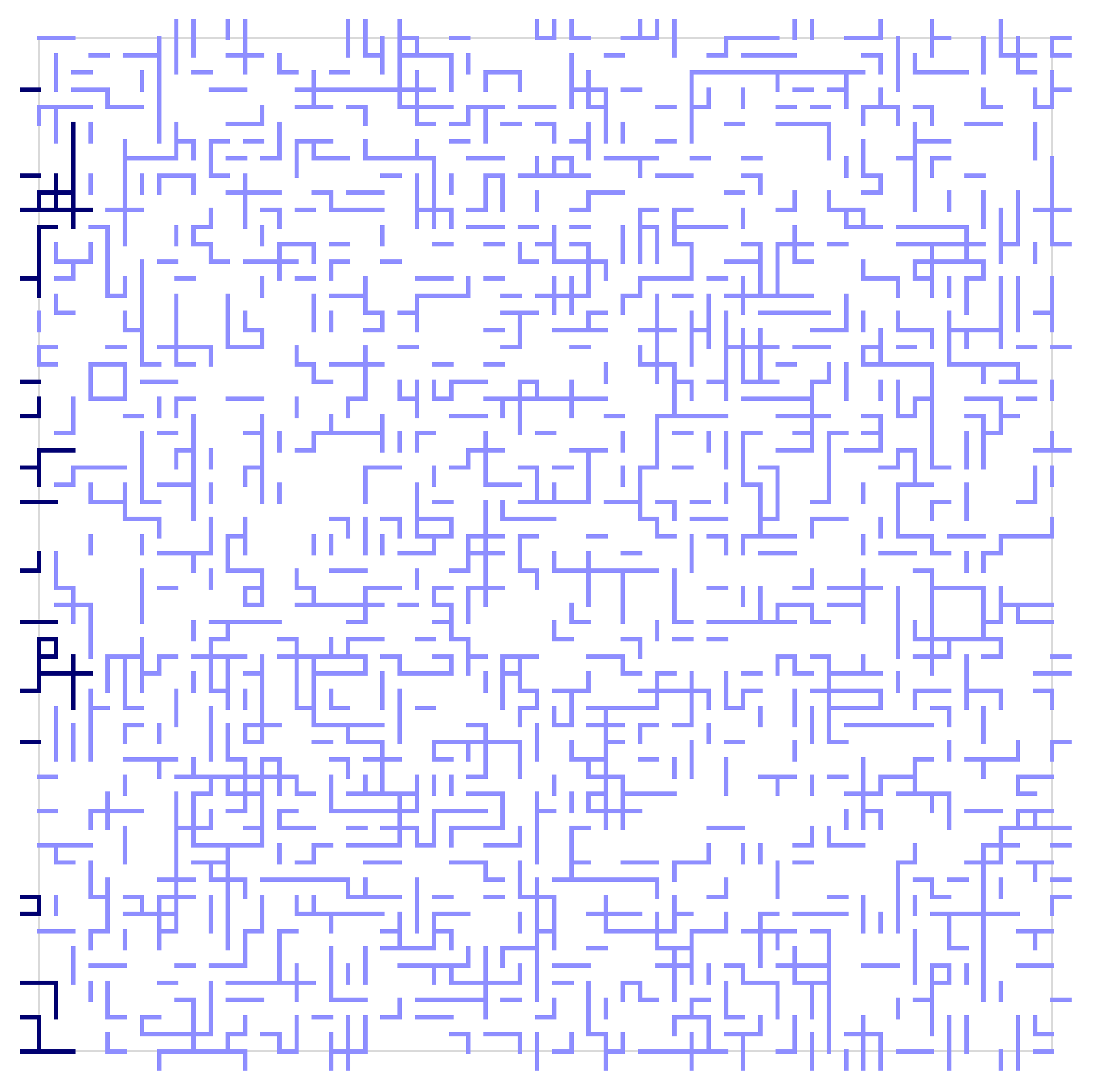}
	\end{subfigure}
	\caption{A $60\times60$ section of $\ZZ^2$ with blue edges in the \modeltwo\
	of  Example~\ref{ex:indep} indicated, with $p=0.8$ and with $\lambda=0.5$ (left) and $\lambda=0.3$ (right). 
	Segments which cross the boundary of the box have been cut off for clarity. 
	Segments which cross the left boundary, and those connected to them, are coloured darker. 
	Occupied sites that are not blue have been omitted.}
	\label{fig:indep}
\end{figure}
Figure \ref{fig:indep} shows simulations of the set of blue edges of $\ZZ^2$ in the  \modeltwo\  
	with $p=0.8$, and with $\lambda=0.5$ and $\lambda=0.3$.

There is no apparent stochastic monotonicity in the set of blue sites as a function of $p$ (with fixed $\lambda$) for the independent model.  An easy calculation shows that the probability that a given edge is blue is $\lambda$, so in particular it does not depend on $p$.
On the other hand, the 
 probability that a given vertex is incident to a blue edge is 
 $1- (1-\lambda)^d+p\left\{(1-\lambda)^d-(1-\lambda)^{2d}\right\}$, which is increasing in $p$.  For two distinct edges $e$ and $e'$ that are collinear, one can show that  $\Corr(\id_{\{e\text{\rm\ is blue}\}},\id_{\{e'\text{\rm\ is blue}\}}) = (1-p)^k$, where $\id_A$ denotes the indicator function of the event $A$, and $k\geq1$ is the number of lattice sites between $e$ and $e'$. 	Thus correlations are decreasing in $p$ (if $e$ and $e'$ are not collinear then  the events $\{e\text{\rm\ is blue}\}$ 		
 and $\{e'\text{\rm\ is blue}\}$ are independent).
	
In each example above, there are two levels of randomness. First, one chooses a site percolation configuration $\om$
according to the product measure $\pp$, and then the blue edges are selected 
according to an appropriate conditional measure
$P_\om$. 
  We formalize this as follows.
Let $\PP_p$ denote the probability measure on the space $(\Om_V\times \Om_E,\sigma(\mc{F}_V\times \mc{F}_E))$, such that, for measurable rectangles of the form $A\times B$,
\begin{equation}
\label{joint}
\PP_p(A \times B)=\int_A P_\omega(B)\, d\pp(\omega).
\qq A\in \mc{F}_V,\ B\in \mc{F}_E,
\end{equation}
Whenever we need to make the dependence on $\lambda$ explicit,
as  in Example \ref{ex:indep}, this will be written as $\PP_{p,\lambda}(A\times B)=\int_A P^\lambda_\omega(B)\, d\pp(\omega)$, where $P^\lambda_\omega$ is the (quenched) law of the independent model with parameter $\lambda$, 
conditional on the site percolation configuration $\omega$.  We will sometimes abuse notation and use the notation $\PP_p$ to denote the `annealed' probability measure on $(\Om_E, \mc{F}_E)$, 
viz.~$\PP_p(\Omega_V \times \cdot)$.  In any case, since all our 
results are statements of almost-sure type, they hold $P_\om$-a.s.\ in the quenched setting.

%We formalize the relevant probability spaces as follows. The $d$-dimensional hypercubic lattice
%is denoted $\ZZ^d$, and we write $\ZZ^d$ for the set of its  vertices 
%(also called \lq sites') and $\EE^d$ for the set of its edges.
%The configuration space of site percolation is $\Om=\{0,1\}^{\ZZ^d}$. Given $\om\in\Om$, the 
%configuration space of the set $F(\om)$ of feasible pairs is $\Psi(\om)=\{0,1\}^{F(\om)}$.
%The relevant $\si$-fields are those generated by the finite-dimensional cylinder events.
%We shall study the set of blue edges arising through consideration of
%a probability measure of the form $\PP_p\times P_\om$
%on $\Om\times\Psi(\om)$. The space of blue configurations is $\Xi=\{0,1\}^{\EE^d}$. 

We write $\pcsite$ (\resp, $\pcbond$) for the critical probability of site percolation
(\resp, bond percolation) on the lattice under consideration.

The structure of this paper is as follows. The main results for the \modelone\ and the \modeltwo\ are
presented in Section \ref{sec:main}, together with our conjectures for the full phase diagrams of the 
two models under consideration. 
The proofs of the main theorems are found in Sections \ref{sec:4}--\ref{sec:6}.

\section{The main results}\label{sec:main}

\subsection{The \modelone}
In this section all a.s.~statements are made with respect to the measure $\PP_p$.
\begin{theorem}\label{thm:compass}
	Let $d \ge 2$.
	For the \modelone\ of Example \ref{ex:compass}, there exist  $0<p_0(d) \le p_1(d)<1$ such that: 
	\begin{itemize}
		\item[(i)]  if $p\in (0,p_0(d))$, there exists a.s.\  a unique
		 infinite cluster of blue edges,
		\item[(ii)] if $p\in(p_1(d),1]$, there exists a.s.\  no infinite cluster of blue edges.
	\end{itemize}
\end{theorem}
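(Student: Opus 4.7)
My plan is to prove the two parts separately: (ii) by an expectation bound on the number of blue self-avoiding walks (SAWs) from the origin, and (i) by a block renormalisation that exploits the fact that, for small $p$, typical segments are very long and hence cross many perpendicular ones.

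For part (ii), consider first $p=1$, in which every site is occupied, each declared segment is a single nearest-neighbour edge, and the chosen directions of the vertices are i.i.d.\ uniform over the $2d$ options. For a self-avoiding walk $\gamma=v_0v_1\cdots v_n$, I would analyse $\PP_1(\text{all $n$ edges of $\gamma$ are blue})$ by conditioning on the chosen direction at each $v_j$, classified as \emph{forward} along $\gamma$, \emph{backward} along $\gamma$, or \emph{other}. Independence across $j$ together with the fact that each $v_j$ can declare at most one of its two path-edges reduces the calculation to a $3\times 3$ transfer matrix whose spectral radius is $1/(2d)$, giving
\[
\PP_1(\text{all edges of $\gamma$ are blue})\le C n (2d)^{-n}
\]
uniformly in the geometry of $\gamma$. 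Combined with $c_n\le 2d(2d-1)^{n-1}$, the expected number of blue SAWs from $o$ of length $n$ is at most $C'n((2d-1)/(2d))^n$, which decays exponentially, so the blue cluster of $o$ is almost surely finite at $p=1$. For $p$ slightly below $1$, any segment of length $\ge k$ occurs with probability at most $(1-p)^{k-1}$, and a standard Peierls-type extension (decomposing a putative blue SAW into maximal collinear \lq\lq runs'' and summing geometrically over segment lengths) shows that the exponential decay persists throughout an interval $(p_1(d),1]$, proving (ii).

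For part (i), the heuristic is that for small $p$ a typical blue segment has length of order $1/p$ and passes through of order $1/p$ interior (unoccupied) lattice sites; at each such site the perpendicular gap-segment is blue with probability $\lambda$ from~\eqref{lambda_def_choice}, and distinct perpendicular lines give independent such events, so a typical blue segment meets roughly $\lambda/p$ perpendicular blue segments. Any two perpendicular blue segments sharing a common lattice site automatically belong to the same blue cluster, so each blue segment is connected to $\sim\lambda/p$ perpendicular \lq\lq children'', a supercritical branching structure for $p$ small. I would turn this into a proof via block renormalisation: pick a scale $L=L(p)$ with $pL\to\infty$ as $p\to 0$, call an $L$-block \emph{good} if it contains a single blue cluster meeting all $2d$ faces, and show $\PP_p(\text{block is good})\to 1$ as $p\to 0$. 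A Liggett--Schonmann--Stacey-type finite-range domination then embeds a supercritical Bernoulli site percolation process on the coarse-grained lattice within the good blocks, forcing an infinite blue cluster on $\ZZ^d$.

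Uniqueness of the infinite cluster follows from the standard Burton--Keane argument, since $\PP_p$ is translation-invariant and ergodic (inheriting these from the product measure $\pp$) and has a suitable finite-energy property: the site states and chosen directions in any finite region may be resampled at positive conditional probability given the configuration outside. The main technical obstacle is part (i): the absence of stochastic monotonicity in $p$ forbids a reduction to a limiting independent model, and the dependencies between blue segments that share an endpoint require care both in defining the \lq\lq good-block'' event and in establishing enough (conditional) independence across blocks to run the domination step.
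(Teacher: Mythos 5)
The most serious problem is your treatment of uniqueness. You assert that uniqueness ``follows from the standard Burton--Keane argument'' because the site states and chosen directions in a finite region can be resampled at positive conditional probability. That resampling property concerns the underlying product measure on $\Om_V\times\sD^{\ZZ^d}$, not the law of the blue-edge configuration, and it is the latter to which Burton--Keane \cite{BK} (or the positive finite energy variant of \cite{gandolfi}) must be applied. For the \modelone\ the induced measure on $\{0,1\}^{\EE^d}$ does \emph{not} have the finite energy property \eqref{fin-en}: Section \ref{sec:6} exhibits a local blue/non-blue pattern of positive probability (two unit squares of blue edges meeting at $(0,0)$ and $(1,0)$, with the other incident edges non-blue) which forces, a.s., the edge between $(0,0)$ and $(1,0)$ to be non-blue, because each of its endpoints must have spent its single choice inside its square. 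Resampling the underlying variables does not rescue this, since blueness is a non-local deterministic function of $(\om,\text{directions})$: changing a site's state or direction in a finite box alters segments reaching far outside it, so one cannot force an arbitrary local blue configuration. This is exactly why the paper proves uniqueness by an adapted Burton--Keane surgery (Theorem \ref{thm:one-uniq}), first confining all blue segments meeting $D_m$ inside a larger box $D_n$ via the event $F_{m,n}$, then carefully re-pointing and de-occupying vertices so as to merge clusters (or create a trifurcation) without destroying infiniteness elsewhere. Your proposal is missing this entire ingredient, and the step as you state it would fail.

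The other two parts are sketches whose hard steps are asserted rather than carried out, though the ideas are reasonable. For (ii) you take a genuinely different route from the paper: your $p=1$ computation is correct (the event that a self-avoiding walk of length $n$ is blue is the union of the $n+1$ conflict-free edge-to-endpoint assignments, giving $(n+1)(2d)^{-n}$), but the extension to $p<1$ is where all the work lies: a single long segment makes $j$ collinear edges blue at cost only about $(1-p)^{j-1}/(2d)$, and distinct runs of the walk may lie on the same lattice line or share turning vertices, so the per-run factors are neither independent nor obviously multiplicative; ``standard Peierls'' does not dispose of this without a careful accounting. The paper instead couples the blue set into the corrupted compass model of \cite{compass} and quotes its eigenvalue criterion, which buys a short proof at the price of an external input; your route, if completed, would be self-contained and could give explicit bounds on $p_1(d)$. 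For (i) your plan is in the same spirit as the paper (block renormalisation plus \cite{LSS}), but your good-block event ``contains a single blue cluster meeting all $2d$ faces'' is not measurable with respect to a bounded window, since the segments making an edge blue may have endpoints arbitrarily far away; as stated the $1$-dependence needed for the domination step fails. The paper's events $A(o)$ and $C(o)$ are built from feasible pairs whose endpoints are confined to the box (or to two adjacent boxes), which is precisely what makes their probabilities computable from C1--C2 and the block process exactly $1$-dependent; you would need an analogous localisation, together with an actual proof that the good-block probability tends to $1$ as $p\downarrow 0$.
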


See Sections \ref{sec:4} and \ref{sec:6} for the proofs.
We make the following further conjecture.

\begin{conjecture}
	\label{con:compass}
For the \modelone\ on $\ZZ^d$ with $d \ge 2$,
\begin{itemize} 
	\item[(i)]	there exists $\pc(d)\in (0,1)$ such that, for $p\in (0,\pc(d))$, there exists a.s.\ a
	unique infinite blue cluster, 
	while for $p>\pc(d)$ there exists a.s.\ no infinite blue cluster,
	\item[(ii)] $\pc(d)$ is strictly increasing in $d$,
	\item[(iii)] for given $d\ge 2$, the probability that the origin lies in an infinite blue cluster is non-increasing in $p$.
\end{itemize}
\end{conjecture}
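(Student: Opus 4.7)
The plan is to handle (i) and (ii) separately, using block renormalization for small $p$ and a functional-graph analysis near $p=1$.

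\textbf{Part (i): small $p$.} When $p$ is small, occupied sites on each coordinate line are sparse, so a typical feasible segment has length of order $1/p$. Consequently, two long blue segments emanating in perpendicular directions from nearby occupied sites tend to cross at unoccupied interior lattice sites, creating blue connections in the cluster graph. I would run a standard renormalization argument: partition $\ZZ^d$ into cubic blocks of side length $L = C/p$, for a constant $C$ to be tuned. For each block $B$ define a \emph{good} event $G_B$ asserting that (a) $B$ contains enough occupied sites declaring long blue segments in each of the $2d$ coordinate directions to span $B$ and enter each neighbouring block, and (b) those blue segments all lie in a single blue cluster through their mutual crossings inside a small enlargement of $B$. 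Geometric-length estimates give $\PP_p(G_B)\to 1$ as $p\to 0$. Since $G_B$ depends only on the configuration in a bounded-in-block-units neighbourhood of $B$, the family $\{G_B\}$ is finite-range dependent, and Liggett--Schonmann--Stacey domination yields stochastic domination by a Bernoulli site percolation on the block lattice of any prescribed density. Choosing $p$ small enough that this density exceeds $\pcsite(\ZZ^d)$ produces an infinite good-block cluster, and hence an infinite blue cluster. Uniqueness follows from a Burton--Keane argument, applicable because modifying the single choice $f_v$ at a given occupied vertex has positive conditional probability under $\PP_p$.

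\textbf{Part (ii): $p$ close to $1$.} At $p=1$ every site is occupied and every feasible pair is a nearest-neighbour edge, so the one-choice model becomes a random functional graph: independently for each $v$, a uniform neighbour $f(v)$ is chosen, and the blue edge set is $\{(v,f(v)):v\in\ZZ^d\}$, taken undirected. The cluster $C_o$ of the origin decomposes into the \emph{forward trajectory} $o,f(o),f^2(o),\dots$ and the \emph{backward tree} of all vertices eventually mapped onto that trajectory under iterates of $f$. Annealed, the forward trajectory evolves as a simple random walk until its first self-intersection, after which it is forced into a finite deterministic cycle; since $c_n/(2d)^n\to 0$ for the self-avoiding walk count $c_n$ (the connective constant of $\ZZ^d$ being strictly less than $2d$), this self-intersection time is almost surely finite, and the trajectory visits only finitely many distinct vertices. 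The backward subtree rooted at any trajectory vertex is stochastically dominated by a Galton--Watson process with offspring law $\mathrm{Binomial}(2d-1,1/(2d))$ (excluding the parent direction), whose mean $1-1/(2d)<1$ makes it a.s.\ finite. Therefore $|C_o|<\infty$ a.s.\ at $p=1$, and by translation invariance no infinite blue cluster exists.

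To extend to $p$ slightly less than $1$ I would redo the branching-process bound with two modifications. First, the offspring distribution of the backward tree must account for the nearest occupied site in a given direction possibly being further away than the immediate lattice neighbour; this is only a mild correction and keeps the mean offspring close to $1-1/(2d)$. Second, crossings of perpendicular blue segments at unoccupied interior sites create additional connections not present at $p=1$; these must be absorbed into an enlarged dominating branching process, but since blue segments typically have length $1+O(1-p)$, the expected number of crossing-induced children per trajectory vertex is $O(1-p)$. The total mean offspring therefore remains $1-1/(2d)+o(1)<1$ as $p\to 1$, yielding a.s.\ finite blue clusters for all $p$ in some interval $(p_1(d),1]$.

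\textbf{Main obstacle.} Part (i) is largely routine once the good-block event is set up with the right geometry. The genuinely subtle step is the extension in part (ii) from $p=1$ to $p<1$: the crossings of long blue segments at unoccupied interior sites generate connections with no analogue at $p=1$, and controlling them requires carefully absorbing their contribution into a dominating subcritical branching process.
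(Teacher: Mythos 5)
The statement you were asked about is Conjecture \ref{con:compass}, which the paper does not prove and explicitly leaves open; what your proposal actually sketches is (at best) the weaker Theorem \ref{thm:compass}. The content of the conjecture is (i) a \emph{single} critical value $\pc(d)$ so that the percolating and non-percolating regimes are exactly $(0,\pc(d))$ and $(\pc(d),1]$, (ii) strict monotonicity of $\pc(d)$ in $d$, and (iii) monotonicity in $p$ of the probability that the origin lies in an infinite blue cluster. Your two-regime argument (block renormalization for small $p$, functional-graph/branching analysis near $p=1$) can only produce two constants $p_0(d)\le p_1(d)$ with no control of the intermediate interval; to upgrade this to a sharp threshold you would need some monotonicity in $p$, and the paper points out that no stochastic monotonicity of the blue set in $p$ is apparent (the edge density $\la$ is constant in $p$, while alignment correlations \emph{decrease} as $p$ grows) --- this is precisely why (i) and (iii) are conjectural rather than theorems. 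Parts (ii) and (iii) are not addressed in your proposal at all; note also the paper's remark that restricting the $d$-dimensional \modelone\ to a two-dimensional slab does not yield the two-dimensional \modelone, so even non-strict monotonicity of $\pc(d)$ in $d$ is not automatic from slab inclusion.

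Within what you do sketch there are two further problems. First, your uniqueness step (``Burton--Keane applies because modifying the single choice $f_v$ has positive conditional probability'') fails as stated: the one-choice measure does not satisfy the finite energy property, nor even positive finite energy --- the paper exhibits a local configuration in $d=2$ whose occurrence forces a specified edge to be non-blue almost surely given the states of all other edges --- and uniqueness requires the bespoke surgery adaptation of \cite{BK} carried out in Theorem \ref{thm:one-uniq}. Second, the passage from $p=1$ to $p<1$ in your branching bound is the entire difficulty, not a perturbative correction: crossings of long perpendicular blue segments at unoccupied interior sites, and the longer segments themselves, must be dominated rigorously, and the paper sidesteps this by coupling the blue set into the corrupted compass model of \cite{compass} and invoking the eigenvalue criterion proved there (the eigenvalues tending to $(2d-1)/(2d)$ and $0$ as $p\to1$). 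Your small-$p$ block construction with Liggett--Schonmann--Stacey domination is essentially the paper's route to Theorem \ref{thm:compass}(i), so that portion is sound in spirit --- but even fully executed, your proposal proves the theorem, not the conjecture.
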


Note in this example  with $d>2$ that, if we restrict the $d$-dimensional model to the two-dimensional
subgraph $\{0\}^{d-2}\times\ZZ^2$, the result is not 
the \modelone\ on $\ZZ^2$.   

\begin{remark}\label{rem:4}
Our numerical estimates of $\pc(2)$ and $\pc(3)$ in the \modelone\ are $\pc(2)\approx 0.505$ and $\pc(3)\approx 0.862$. 
\end{remark}

\subsection{The \modeltwo}
All a.s.~statements in this section are made with respect to the measure $\PP_{p,\lambda}$.

\begin{theorem}\label{thm:indep}
Let $d \ge 2$.
For the \modeltwo\ of Example \ref{ex:indep}, with parameters $(p,\lam)$,
	\begin{itemize}
		\item[(i)] if $\lambda < p/(2d-1)$, there is a.s.\ no infinite cluster of blue edges,
		\item[(ii)]  there exists an absolute constant $c>0$ such that,
		if $\la>c\log(1/q)$ where $p+q=1$, there exists a.s.\ a unique infinite blue cluster,
		\item[(iii)] there exists a Lipschitz continuous function $\olc:(0,1]\to(0,1]$, that
		satisfies $\olc(\pcsite)=1$, $\olc(1)=\pcbond$, and is strictly decreasing on $[\pcsite,1]$,
		  such that, for $p>\pcsite$ and $\lam>\olc(p)$,
there exists a.s.\ a unique infinite blue cluster.
	\end{itemize}
\end{theorem}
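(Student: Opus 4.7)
The plan is to treat the three parts by distinct arguments, with (iii) the most substantial.

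For \textbf{(i)}, I would use a first-moment argument on blue segment paths. Let $N_n$ denote the number of sequences $o=v_0,v_1,\dots,v_n$ of distinct occupied sites with each consecutive pair $(v_{i-1},v_i)$ a blue feasible segment. Conditional on a direction sequence $d_1,\dots,d_n\in\{\pm e_1,\dots,\pm e_d\}$ (no immediate backtracking, since $d_{i+1}=-d_i$ returns to $v_{i-1}$) and segment lengths $\ell_i\ge 1$, the probability of a specific realisation factors as $\lambda^n\cdot p^{n+1}\cdot(1-p)^{\sum(\ell_i-1)}$, arising from segment blueness, endpoint occupation, and interior non-occupation. Summing the geometric series over lengths and counting the $O((2d-1)^n)$ direction sequences bounds $\EE[N_n]$ by a quantity that decays geometrically under the stated hypothesis, forcing the blue cluster of the origin to be a.s.\ finite.

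For \textbf{(ii)}, I would use a block-renormalisation argument tailored to the small-$p$, long-segment regime. Partition $\ZZ^d$ into blocks of side $L\sim 1/\log(1/q)$, and declare a block \emph{good} if its interior contains enough long blue segments crossing it in each coordinate direction so as to link it to all nearest-neighbour blocks. Since the nearest occupied site in any fixed direction from a given occupied site has geometric spacing with parameter $p$, the probability of missing a crossing of length $L$ is of order $q^L$, and each feasible segment is independently blue with probability $\lambda$; hence $\PP(\text{block is good})$ can be made arbitrarily close to $1$ once $\lambda>c\log(1/q)$ for a suitable absolute $c$. A Liggett--Schonmann--Stacey comparison to supercritical Bernoulli site percolation on the block lattice then furnishes an infinite blue cluster, with uniqueness from Burton--Keane applied to the translation-invariant ergodic blue process.

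For \textbf{(iii)}, the key observation is that the blue-edge set always contains the \emph{short} blue edges: those $(u,v)$ with $u,v$ adjacent occupied sites whose length-$1$ feasible segment is declared blue. For $p>\pcsite$ there is a.s.\ a unique infinite cluster $\sC_\infty$ of occupied sites, and on $\sC_\infty$ the short blue edges realise Bernoulli bond percolation with parameter $\lambda$. Define $\olc(p)$ as the critical parameter of this ``bond-on-cluster'' model; by the Grimmett--Marstrand slab construction, $\olc(p)<1$ for every $p>\pcsite$, with $\olc(1)=\pcbond$ (in which case all sites are occupied and the model reduces to Bernoulli bond percolation on $\ZZ^d$) and $\olc(p)\to 1$ as $p\downarrow\pcsite$. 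Whenever $\lambda>\olc(p)$ one obtains an infinite cluster of short blue edges, hence of blue edges, with uniqueness again from Burton--Keane. Strict monotonicity of $\olc$ on $[\pcsite,1]$ is then deduced by an essential-enhancement argument of Aizenman--Grimmett type, and Lipschitz continuity by a quantitative monotone coupling of the $(p,\lambda)$ models at nearby parameter pairs. The main obstacle is expected to lie here: the analytic properties of $\olc$ go beyond mere non-triviality of the bond-on-cluster critical curve, and require finite-size criteria uniform over compact subsets of $(\pcsite,1]$.
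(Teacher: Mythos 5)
Your part (i) contains the decisive gap. The quantity $N_n$ counts paths $o=v_0,v_1,\dots,v_n$ of \emph{occupied} sites in which consecutive sites are the two endpoints of a blue feasible segment, but the object in the theorem is a cluster of blue \emph{edges}: two blue segments may cross at an unoccupied site lying in the interior of both, and the blue cluster then passes through that crossing without any shared occupied endpoint. Hence $\EE[N_n]\to 0$ does not force the blue cluster of the origin to be finite, and the final step of your argument fails. The failure is visible quantitatively: summing your factorisation $\lambda^n p^{n+1}(1-p)^{\sum_i(\ell_i-1)}$ over the lengths gives a factor $1/p$ per step, so $\EE[N_n]\le 2dp\,\bigl((2d-1)\lambda\bigr)^n$ up to constants, and your criterion would be $\lambda<1/(2d-1)$ \emph{uniformly in} $p$. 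That would contradict part (ii), which yields percolation for arbitrarily small $\lambda$ once $p$ is small (long segments have many crossings). The paper's proof of (i) is built exactly around the missing mechanism: the cluster exploration is dominated by a two-type branching process in which every site of a blue segment is a particle, an occupied site producing on average at most $(2d-1)\lambda/p$ new sites (a blue segment has mean length of order $1/p$) and an unoccupied interior site producing at most $2(d-1)\lambda/p$ new sites via blue segments crossing it along the other coordinate axes. Both means are below $1$ precisely when $\lambda<p/(2d-1)$; your path count has no mechanism producing the factor $p$.

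Parts (ii) and (iii) follow the paper's route, with three caveats. For (ii), the paper runs the block construction inside the two-dimensional slab $\{0\}^{d-2}\times\ZZ^2$ precisely so that the Liggett--Schonmann--Stacey domination threshold, and hence the constant $c$, is absolute; a block argument carried out directly in $\ZZ^d$, as you propose, gives a $d$-dependent constant (and the success probability of a block comes from having order $r\sim 1/\log(1/q)$ independent crossing attempts each succeeding with probability of order $\lambda$, not merely from $q^{L}$ being small). For (iii), your ``short blue edges'' model is exactly the mixed site--bond percolation model, and the analytic properties of $\olc$ that you defer (Lipschitz continuity, strict monotonicity on $[\pcsite,1]$, $\olc(\pcsite)=1$, $\olc(1)=\pcbond$) are precisely the content of the Chayes--Schonmann theorem, proved by the Aizenman--Grimmett differential-inequality method you allude to; the paper simply invokes that result rather than reproving it. Finally, uniqueness in (ii) and (iii) via Burton--Keane requires the finite energy property of the blue-edge law, which is not a product measure; this holds for the \modeltwo\ but needs the short verification given in Proposition \ref{prop:BK}, which your sketch omits.
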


See Sections \ref{sec:5} and \ref{sec:6} for the proofs.
Figure \ref{fig:frontier} indicates what we believe to be the critical frontier for percolation in the \modeltwo\ in 
$2$ dimensions (based on simulations), and also what is proved.

\begin{figure}
	\centerline{
	\includegraphics[width=0.6\textwidth]{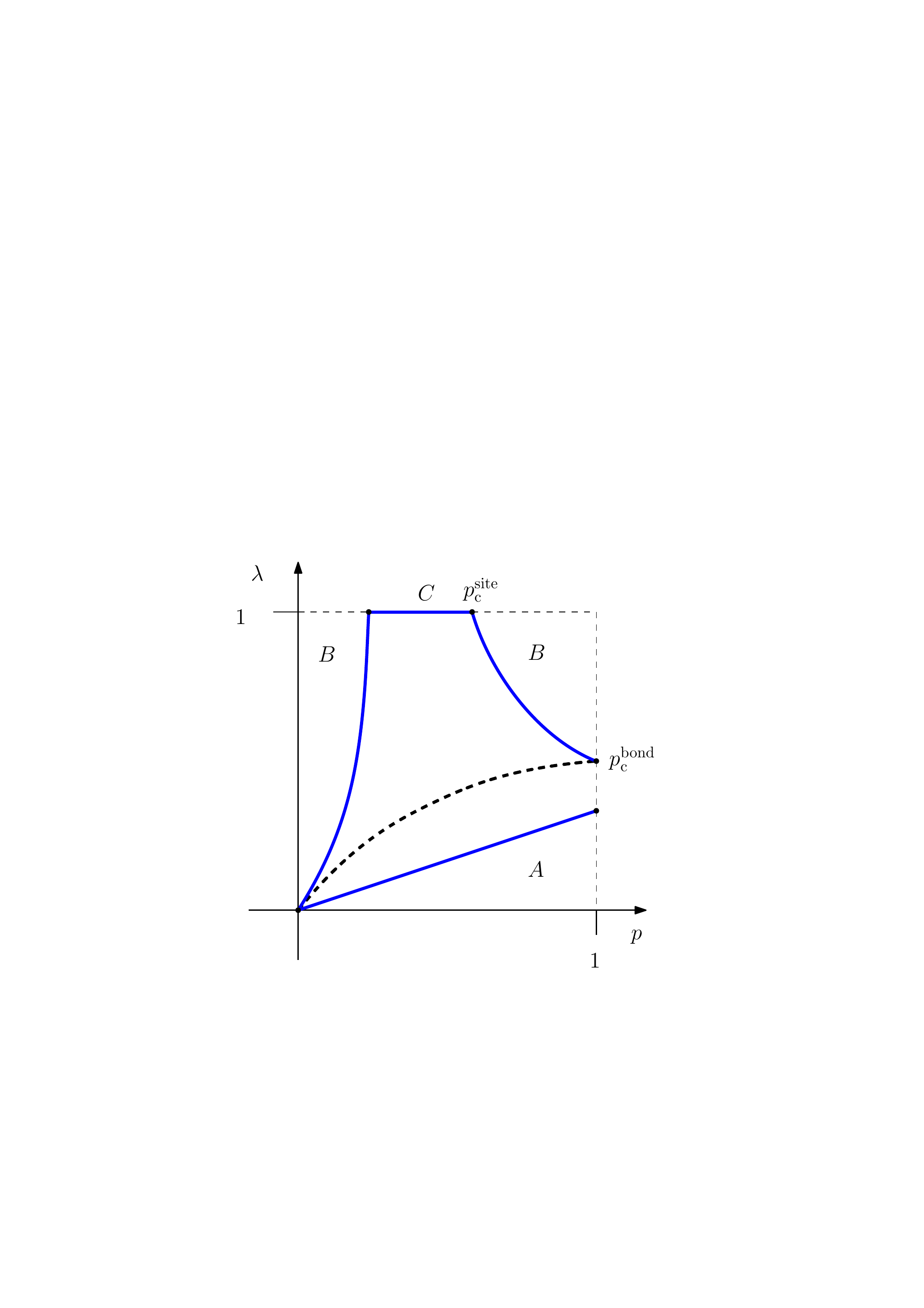}}
	\caption{A picture of the phase diagram for the \modeltwo\ in two dimensions.
	We conjecture there exists an infinite blue cluster for $(p,\lambda)$ above the dashed curve, and none below.  
	The solid lines indicate the regions identified in Theorem \ref{thm:indep}.  There is no percolation in region $A$
	where $\lambda\le p/3$, but there exists a unique infinite blue cluster in the two regions labelled $B$,
	and along the line $C$.}
	\label{fig:frontier}
\end{figure}

\begin{conjecture}
	\label{con:indep}
	For the \modeltwo\ on $\ZZ^d$ with $d \ge 2$, there exists $\lc=\lc(p,d)\in (0,1)$ such that:
	\begin{itemize}
		\item[(a)]  $\lc(\cdot,d)$ is continuous and strictly increasing on $(0,1]$,
		\item [(b)] for $p>0$ and $\lambda<\lc(p,d)$, there exists a.s.~no infinite blue cluster,
		 \item[(c)] for $p>0$ and $\lambda>\lc(p,d)$, there exists a.s.\ a unique infinite blue cluster, 
	\end{itemize}
Moreover, for each fixed $\lambda$, the probability that the origin is in an infinite blue cluster is non-increasing in $p$.
\end{conjecture}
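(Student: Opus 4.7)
The plan is to define
\[
\lc(p,d) := \sup\bigl\{\la \in [0,1] : \PPpl(\text{origin in an infinite blue cluster}) = 0\bigr\},
\]
which is well-defined because the \modeltwo\ is stochastically monotone in $\la$ at fixed $p$ (couple all values of $\la$ via a single uniform variable per segment). Parts (b) and (c) then reduce to the claims that $\lc(p,d) \in (0,1)$, that there is a zero-one law for the existence of an infinite blue cluster, and that the cluster is a.s.\ unique when it exists. The first follows at once from Theorem \ref{thm:indep}: (i) gives $\lc(p,d) \ge p/(2d-1) > 0$, while (ii) or (iii) yields $\lc(p,d) < 1$. The zero-one law follows from ergodicity of $\PPpl$ under lattice translations, and uniqueness follows from a Burton--Keane argument adapted to the two-level randomness: insertion tolerance is verified by resampling both the occupation state and the segment states attached to any chosen vertex.

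For the assertion that $\theta(p,\la) := \PPpl(\text{origin in an infinite blue cluster})$ is non-increasing in $p$ at fixed $\la$, I would aim to couple $\PP_{p,\la}$ and $\PP_{p',\la}$ for $p < p'$ through the natural ``site-insertion'' operation: turning an additional site on breaks some blue segment into two pieces that are then independently blue with probability $\la$ each. This preserves edge marginals but destroys the collinear correlation $(1-p)^k$ noted after Example \ref{ex:indep}. The key step would be to show, via a Russo-type formula for $\partial_p \theta$, that the sign of each pivotal contribution is non-positive --- that is, splitting a long blue segment into two conditionally independent halves can only decrease the probability of an infinite cluster at the origin. Combined with monotonicity in $\la$, this yields that $\lc(\cdot,d)$ is non-decreasing.

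Continuity of $\lc(\cdot,d)$ and the sharp dichotomy in (b), (c) would follow once one proves sharpness of the phase transition in $\la$ at each fixed $p$. The natural route is to adapt the OSSS/randomized-algorithm method of Duminil-Copin--Raoufi--Tassion to the two-level environment: reveal the feasible-pair structure first, then run the decoder on the quenched segment percolation, and bound the quenched influence of a single-segment resampling before averaging over $\om$. Strict monotonicity of $\lc(\cdot,d)$ is the most delicate part and likely requires an enhancement argument \`a la Aizenman--Grimmett: at a critical pair $(p,\lc(p,d))$, exhibit a local finite-energy modification of the two-level environment whose effect on connectivity is strictly signed, forcing the critical curve to shift. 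The main obstacle, and the reason the statement is only a conjecture, is that all of this machinery is engineered for models that are stochastically monotone in the parameter being varied, whereas the \modeltwo\ is explicitly non-monotone in $p$ for fixed $\la$: neither $\PP_{p,\la}$ nor $\PP_{p',\la}$ dominates the other. Overcoming this probably requires a new two-parameter coupling that restores monotonicity along a suitable curve in $(p,\la)$-space, or a direct analysis of $\theta$ as a functional of the two-level environment, rather than a mechanical assembly of Burton--Keane, Russo, OSSS and enhancement arguments.
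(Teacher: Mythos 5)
First, note that the statement you are proving is Conjecture \ref{con:indep}: the paper offers no proof of it, and indeed cites the later preprint \cite{HU} (in a remark added on revision) precisely because even the \emph{existence} of a critical value $\lc(p,d)<1$ for every $p$ was open at the time of writing. Your proposal is a reasonable research programme, and you correctly identify the central obstruction (the model is not stochastically monotone in $p$ at fixed $\la$, so neither $\PP_{p,\la}$ nor $\PP_{p',\la}$ dominates the other), but it is not a proof, and one step that you present as immediate is in fact false as stated. You claim that ``(ii) or (iii) yields $\lc(p,d)<1$'' at once from Theorem \ref{thm:indep}. Part (ii) gives percolation for some $\la<1$ only when $c\log(1/q)<1$, i.e.\ for $p<1-e^{-1/c}$ with $c$ an unspecified absolute constant, while part (iii) applies only for $p>\pcsite$ (and even there only gives $\la>\olc(p)$, with $\olc(\pcsite)=1$). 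These two ranges of $p$ need not cover $(0,1]$, and nothing in the paper rules out an intermediate interval of $p$ on which no value of $\la<1$ is proved to percolate; this is exactly the gap between the two regions labelled $B$ in Figure \ref{fig:frontier}. So even the weakest part of the conjecture, $\lc(p,d)<1$ for all $p$, does not follow from the stated theorems.

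The remaining steps are either fine but partial, or are the genuinely hard content dressed as a plan. Defining $\lc$ as a supremum, monotonicity in $\la$ by a per-segment uniform coupling, the zero--one law by ergodicity, and uniqueness via Burton--Keane using the finite-energy property (Proposition \ref{prop:BK}) are all sound and consistent with what the paper actually establishes; they give (b) and (c) \emph{only below and above} $\lc$ as you have defined it, which is automatic from the definition. But the substantive claims --- continuity and strict monotonicity of $\lc(\cdot,d)$, and monotonicity of $\theta$ in $p$ --- rest on an unproved assertion: that in your Russo-type formula the effect of inserting a site (splitting one $\la$-blue segment into two independently $\la$-blue halves) is non-positively signed for the increasing event $\{o\lra\infty\}$. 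The before and after configurations are not stochastically ordered (both halves blue drops from $\la$ to $\la^2$, while exactly one half blue rises from $0$ to $2\la(1-\la)$), so the sign of the pivotal contribution is precisely the open problem, not a lemma one can invoke. Likewise the OSSS and enhancement steps are proposed, not executed. In short: your write-up correctly diagnoses why this is a conjecture, but it should not be read as a proof of any part of it beyond what Theorem \ref{thm:indep} already gives.
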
	

\begin{remark}[Added on revision]
In their recent preprint \cite{HU}, Hil\'ario and Ungaretti have proved the existence 
of such a critical function
$\lc$ with several properties including that $\lc(p)\le \pcH$ for $p\in(0,1]$ and $d \ge 2$,
where $\pcH$ is the critical probability of bond percolation on the hexagonal lattice.
\end{remark}

When $p=1$, the process is simply bond percolation with edge-density $\la$, 
whence $\lc(1,d)=\pcbond(d)$, the critical probability
of bond percolation on $\ZZ^d$. If $\lc$ is continuous on a neighbourhood of $p=1$, it then follows
that $\lc(p)\to \pcbond$ as $p\to 1$.

\begin{remark}
	  Our numerical estimate for the critical frontier of the \modeltwo\ is sketched 
	  as the dashed line in Figure \ref{fig:frontier}. 
\end{remark}

Consider the \modeltwo\ for given $(p, \la)$, and let $2\le d<d'$.
We may embed $\ZZ^d$ into $\ZZ^{d'}$ by identifying
$\ZZ^d$ with $\{0\}^{d'-d}\times \ZZ^d$. For given $(p, \la)$, the \modeltwo\
on $\ZZ^{d'}$, when restricted to this image of $\ZZ^d$, is simply a $d$-dimensional \modeltwo.
It follows in particular that $\lc(p,d)$ is non-increasing in $d$ (assuming $\lc$ exists).

\section{ Proof of Theorem \ref{thm:compass}(\textup{ii})}\label{sec:4}

We show that the \modelone\ and a \lq corrupted compass model' 
(see \cite[Sect.\ 3]{compass}) may be coupled in such a way that the first is a subset of the second.
It is known that the second does not percolate for sufficiently large values of $p$.

Let $\om\in\Om_V$ be a percolation configuration, with
occupied set $\eta=\eta(\om)$, and let a random feasible pair $f_v$ be as in Example \ref{ex:compass}.
The corrupted compass model is constructed as follows from $\om$ and the $f_v$. 
Each edge $e=(u,v)$ is declared to be \emph{turquoise} if
there exists $x\in\{u,v\}$ such that: either $x$ is unoccupied,
or $x$ is occupied and $e \in f_x$.
\begin{comment}
From each occupied site $v$, declare the edge $(v,w)$ to be \emph{olive}
if $(v,w)$ is an edge of $f_v$.   For each unoccupied site $u$, we colour every incident edge olive.  
Finally, an edge is declared  \emph{turquoise} if and only if it has been declared olive by some vertex.  
\end{comment}

Let $T$ be the set of turquoise edges, and $B$ the set of blue edges in the \modelone.  We claim that
\begin{itemize}
	\item[(a)] $B\subseteq T$, and 
	\item[(b)] there exists $p_0=p_0(d)\in(0,1)$ such that if $p>p_0$, $T$ contains a.s.\ no infinite cluster.
\end{itemize}
Clearly, (a) and (b) imply the result. 

To verify (a), let $e\in B$, and suppose without loss of generality that $e=(o,e_1)$.   
Let 
\[
\ell_e=\inf\{k\ge 0: -ke_1\in \eta\}, \qq r_e=\inf\{k\ge  1:ke_1\in \eta\},
\]
and write  $x=-\ell_ee_1$ and $x'=r_e e_1$ for the closest occupied sites to the left and right of $o$, respectively
(we allow $x=o$).  Since $e\in B$, either $f_{x}=(x,x')$ or $f_{x'}=(x',x)$, or both.  
Without loss of generality, we assume $f_x=(x,x')$.  Then $(x,x+e_1)\in T$.  
Since the sites $\{x+ke_1: k=1,2,\dots,r_e+\ell_e-1\}$ are unoccupied, we have that $(x+ke_1,x+(k+1)e_1)\in T$ 
for $k \in \{1,\dots, r_e+\ell_e-1\}$.  In particular, $e\in T$.

To verify (b) we note that $T$ is precisely the set of edges of the corrupted compass model 
of \cite[Sect.\ 3]{compass} with corruption probability $p_{\{x\}}=1-p$ for each vertex $x\in \ZZ^d$ (that is,
each unoccupied vertex is corrupted).  As in \cite[Sect.\ 4.2]{compass},
$T$ does not percolate when the three eigenvalues $\kappa$ of the matrix 
\begin{equation}
\mathbf{M} = 
(2d-1)\begin{pmatrix}
1-p& 1-p&(1-p)/(2d)\\
1& 1/(2d) & 0\\
0 & 1 & 1/(2d)
\end{pmatrix},
\end{equation}
satisfy $|\kappa|<1$. (See  \cite[eqn (4)]{compass}, 
noting that the parameter $p$ therein is the so-called corruption probability, which is the current $1-p$.)  
As $p\to 1$, the eigenvalues of $\mathbf{M}$ approach their values with $p=1$,
namely $(2d-1)/(2d)$ and $0$, as required.
\qed

\section{Proof of Theorem \ref{thm:indep}(\textup{i, iii})}\label{sec:5}

We begin with an observation on the a.s.\ uniqueness of infinite clusters, when they exist.
There is a general theorem due basically to Burton and Keane \cite{BK,BK2} 
which states that, for any translation-invariant
probability measure $\mu$ on $\Xi=\{0,1\}^{\EE^d}$ 
with the finite energy property, if there exists an infinite cluster 
then it is a.s.\ unique. See also \cite[p.\ 42]{GHM}. A probability measure $\mu$ on $\Xi$ is
said to have the \emph{finite energy property} if
\begin{equation}\label{fin-en}
\mu(e\text{\rm\ is blue}\mid \sT_e)\in(0,1)\q \mu\text{-a.s.}, \qq e \in \EE^d,
\end{equation}
where $\sT_e$ is the $\sigma$-field generated by the colours of edges other than $e$.

The \modeltwo\ measure is evidently translation-invariant, and we state the finite energy property as a proposition.
Let $\mathbb{P}_{p,\lambda}$ be as in \eqref{joint} for the independent model.

%Let $\PPpl=\PP_p \times \PP_\la$, considered as the probability measure of the \modeltwo.

\begin{proposition}[Finite energy property]\label{prop:BK}
Let $(p,\lam)\in (0,1]\times (0,1)$.
	 For  $e\in\EE^d$, we have that
\eqref{fin-en} holds with $\mu=\PPpl$.
\end{proposition}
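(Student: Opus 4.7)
The plan is to reduce \eqref{fin-en} to positivity of the joint probability on cylinder events and then exploit the fact that, when enough sites near $e$ are occupied, the feasible pair containing $e$ consists of that single edge. A standard measure-theoretic reduction, using that the cylinders form a $\pi$-system generating $\sT_e$, shows that \eqref{fin-en} with $\mu=\PPpl$ is equivalent to
\begin{equation*}
\PPpl\bigl(A\cap\{e\text{ is blue}\}\bigr)>0
\quad\text{and}\quad
\PPpl\bigl(A\cap\{e\text{ is not blue}\}\bigr)>0
\end{equation*}
for every cylinder event $A\in\sT_e$ with $\PPpl(A)>0$. So I would fix such an $A$, depending only on the colours of edges in some finite set $E_0\subset\EE^d\setminus\{e\}$.

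Next I would choose $M\in\NN$ large enough that the endpoints of every edge in $E_0\cup\{e\}$ lie in $B_M:=[-M,M]^d\cap\ZZ^d$, and condition on the event $G:=\{\om_v=1\text{ for all }v\in B_M\}$, which has $\pp$-probability $p^{|B_M|}>0$. A short combinatorial check from the definition of feasibility shows that, on $G$, every edge of $\ZZ^d$ with both endpoints in $B_M$ is itself a feasible pair in $F(\om)$ and is contained in no other feasible pair; feasible pairs that straddle or lie outside $B_M$ cannot contain any edge of $E_0\cup\{e\}$. Consequently, conditional on $G$, the colours of the edges in $E_0\cup\{e\}$ are a family of independent Bernoulli$(\la)$ random variables under $\PPpl$.

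From here the argument is essentially a one-line positivity calculation. Since $\PPpl(A)>0$, there is some specific edge-colouring $\chi_0\in A$ on $E_0$, and by the independence above the conditional probability of $\chi_0$ given $G$ is a positive product of factors $\la$ and $1-\la$. Hence $\PPpl(A\mid G)>0$; the colour of $e$ is conditionally independent of $A$ given $G$ (the feasible pair $\{o,e_1\}$ contains no edge of $E_0$); and
\begin{equation*}
\PPpl\bigl(A\cap\{e\text{ blue}\}\bigr)\ge\pp(G)\,\PPpl(A\mid G)\,\la>0,
\end{equation*}
with the analogous estimate using $1-\la$ giving $\PPpl(A\cap\{e\text{ not blue}\})>0$.

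The only step that needs genuine attention is the combinatorial claim that, on $G$, each edge in $E_0\cup\{e\}$ coincides with its containing feasible pair; this is the heart of the argument, since it effectively reduces the independent model, restricted to $B_M$, to ordinary Bernoulli bond percolation with edge-density $\la$, at which point finite energy is immediate. Everything else is routine probabilistic bookkeeping.
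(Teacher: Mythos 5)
There is a genuine gap, and it lies in your very first step. The claimed ``standard measure-theoretic reduction'' is false: \eqref{fin-en} is a statement about the conditional probability given the \emph{full} $\sigma$-field $\sT_e$, and positivity of $\mu(A\cap\{e\text{ blue}\})$ for every \emph{cylinder} $A\in\sT_e$ of positive probability does not imply that $\mu(e\text{ blue}\mid\sT_e)>0$ a.s. By L\'evy's upward theorem the conditional probability given $\sT_e$ is the a.s.\ limit of the conditional probabilities given finite collections of edge colours, and positivity is not preserved in the limit unless you have a lower bound that is \emph{uniform} over cylinders. A toy counterexample: let $C$ be a fair coin, set $Z=C$, and let $(Y_i)_{i\ge1}$ be i.i.d.\ Bernoulli$(1/2)$ if $C=1$ and i.i.d.\ Bernoulli$(1/3)$ if $C=0$. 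Every nonempty cylinder in the $Y$'s meets both $\{Z=1\}$ and $\{Z=0\}$ with positive probability, yet $\mu(Z=1\mid Y_1,Y_2,\dots)=\id_{\{C=1\}}$ vanishes on a set of probability $1/2$. So the cylinder version you prove is strictly weaker than \eqref{fin-en}, and your argument supplies no uniformity: your bound gives $\PPpl(e\text{ blue}\mid A)\ge \PPpl(G)\,\PPpl(A\mid G)\,\la/\PPpl(A)$ with $\PPpl(G)=p^{|B_M|}$, and as $A$ involves more edges the box $B_M$ grows and this bound degenerates to $0$, so you cannot pass to the limit.

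Your combinatorial observation is fine (on $G$ each edge with both endpoints in $B_M$ is its own feasible pair, so those colours are i.i.d.\ Bernoulli$(\la)$ given $G$), but the missing idea is a \emph{localization} of the conditional law rather than conditioning on an ever larger fully occupied box. The paper conditions only on the event $O$ that the two endpoints of $e$ are occupied, and shows that $\PPpl(O\mid\sT_e)=\PPpl(O\mid\sS_0,\sS_1)$, where $\sS_0,\sS_1$ are the blue subsets of the finitely many edges adjacent to the endpoints of $e$: given these, all other edge and site states are conditionally independent of $O$. Since there are only finitely many values of $(\sS_0,\sS_1)$ and each gives $\PPpl(O\mid\sS_0,\sS_1)\ge \PPpl(\sS_0,\sS_1\mid O)p^2>0$, one gets a deterministic constant $c_0(p,\la)>0$ with $\PPpl(O\mid\sT_e)\ge c_0$ a.s., whence $\PPpl(e\text{ blue}\mid\sT_e)\ge\la c_0$ and $\PPpl(e\text{ not blue}\mid\sT_e)\ge(1-\la)c_0$ a.s. If you want to keep your framework, you would need to prove an analogous uniform-in-$A$ lower bound on $\PPpl(e\text{ blue}\mid A)$ over cylinders (e.g.\ via $\PPpl(A\cap O)\ge c\,\PPpl(A)$), which amounts to exactly this localization step.
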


In contrast, the one-choice measure 
does not have the finite energy property, and an
adaptation of the methodology of \cite{BK} will be needed for that case; 
see  Section \ref{sec:6}.

\begin{proof}
Write $e=(o,e_1)$, and let $O$ be the event that both $o$ and $e_1$ are occupied.
Let $E_0$ denote the set of edges incident to $o$ but not $e_1$, and 
 $E_1$  the set of edges incident to $e_1$ but not $o$.
Let $\mc{S}_i$ denote the random set of edges in $E_i$ that are blue.  Then 

\begin{equation}\label{eq:fin-en2}
\PPpl(O\mid \mc{T}_e)=\PPpl(O\mid \mc{S}_0,\mc{S}_1),
\end{equation}
since, given  $\mc{S}_o$ and $\sS_1$, the states of edges other than $E_o\cup E_1\cup\{e\}$
(and sites other than $o$, $e_1$) are conditionally independent of $O$.  

By conditional probability, for $S_i \subseteq E_i$,
\begin{align*}
\PPpl(O\mid \mc{S}_0=S_0,\,\mc{S}_1=S_1)&=\frac{\PPpl(\mc{S}_0=S_0,\,
\mc{S}_1=S_1\mid O)\PPpl(O)}{\PPpl(\mc{S}_0=S_0,\,\mc{S}_1=S_1)}\\
&\ge \PPpl(\mc{S}_0=S_0,\,\mc{S}_1=S_1\mid O)p^2.
\end{align*}
For $S_i\subseteq E_i$,
we have that
$$
\PPpl(\mc{S}_0=S_0,\,\mc{S}_1=S_1\mid O)>0.
$$
Since there are only finitely many choices for $S_0$, $S_1$, there exists $c_0(p,\lambda)>0$ such that 
\begin{align*}
\PPpl(O\mid \mc{S}_0=S_0,\,\mc{S}_1=S_1)\ge c_0(p,\lambda).
\end{align*}
By \eqref{eq:fin-en2}, $\PPpl(O\mid \mc{T}_e)\ge c_0(p,\lambda)$ a.s.

Now, a.s.,
\begin{align*}
\PPpl(e \text{ is blue}\mid \mc{T}_e) &\ge
\PPpl(e \text{ is blue},O\mid \mc{T}_e)\\
&=\PPpl(e \text{ is blue}\mid O,\mc{T}_e)\PPpl(O\mid \mc{T}_e)\\
&=\lambda \PPpl(O\mid \mc{T}_e)\ge \lambda c_0(p,\lambda),
\end{align*}
and, similarly, 
\begin{equation*}
\PPpl(e \text{ is not blue}\mid \mc{T}_e)\ge (1-\lambda) c_0(p,\lambda).
\end{equation*}
This proves \eqref{fin-en}.
\end{proof}

\begin{proof}[Proof of Theorem \ref{thm:indep}(i)]
Let $B_o$ be the set of vertices in the blue cluster at the origin $o$.
We shall bound $|B_o|$ above (stochastically)
by the total size of a certain branching process with two types of particle: 
type U with mean 
family-size $2\lam(d-1)/p$, and type O with mean family-size
$\lam(2d-1)/p$. This is an elaboration of the argument used in
\cite{H57} to prove that $\pc\ge 1/(2d-1)$ for bond percolation on $\ZZ^d$.

We outline the proof as follows. Suppose that $o$ is occupied 
 (the argument is similar if $o$ is unoccupied).
We explore the $2d$ directions incident to $o$, 
and count the number of sites on any blue segments touching $o$
(excluding $o$ itself).
The mean number of such sites is $2d\lam/p$, and this set of sites contains both occupied and unoccupied sites.
We now iterate the construction starting at these new sites. Each occupied site gives rise on average 
to no more than $\mu_1:=(2d-1)\lam/p$ new sites, and each unoccupied site no more than $\mu_2:=2(d-1)\lam/p$ new sites.
After each stage, we will have constructed some subset of $B_o$. 
As the construction develops, we will encounter further un/occupied sites each having a potential for further extensions.
During each stage, starting at some site $v$, the mean number of new sites is no greater than $\mu_1$ if
$v$ is occupied,
and $\mu_2$ if $v$ is unoccupied; the true conditional expectations (given the past history of the process) 
will typically be less than the $\mu_i$,
since some sites thus encountered will have been counted earlier.
  
In order to bound $|B_o|$ above, we consider the process with two types of particle: with type O 
	corresponding to occupied sites and type U corresponding to unoccupied sites. 
Assuming that this process may be taken to be a branching process,
it follows by standard theory that, if $\mu_1,\mu_2<1$, then the total size of the branching process
is  a.s.\ finite. This implies in turn that $\PP(|B_o|=\oo) = 0$ if $\mu_1,\mu_2<1$, which is to say if
$\lam<p/(2d-1)$. A more precise condition on the pair $\lam$, $p$ may be obtained by considering the
above branching process as a $2$-type process, and using the relevant extinction theorem 
(see, for example, \cite[Chap.\ V.3]{AN}).

The above argument  may be written out formally. There are some complications arising from the 
conjunction of probability
and combinatorics, but these do not interfere with the conclusion.
\end{proof}

\begin{proof}[Proof of Theorem \ref{thm:indep}(iii)]

Consider the following \lq mixed' percolation model on $\ZZ^d$ where $d \ge 2$.
Let $\lam, p\in[0,1]$. Each site is occupied independently with probability $p$. Each edge whose endvertices are both occupied is then occupied with probability $\lam$, independently of all other edges. If one or both of the ends of an edge is unoccupied then the edge itself cannot be occupied.
Let $C$ be the occupied cluster containing the origin,
and define the \emph{percolation probability} $\theta(\lam,p)=\PP(|C|=\oo)$.
Evidently, $\theta$ is non-decreasing in $\lam$ and $p$.
The \emph{critical curve} is defined by:
\begin{equation}\label{eq:crit}
\olc(p)=\sup\bigl\{\lam\in[0,1]: \theta(\lam,p)=0\bigr\},\qq p \in [0,1].
\end{equation}

Some basic properties of $\olc$ are proved in \cite[Thm 1.4]{CS}
as an application of the differential-inequality method of \cite{AG}
(see also \cite[Sect.\ 3.2]{G99}), namely the following:
\begin{letlist}
\item $\olc(p)=1$ for  $0\le p\le \pcsite$,
\item $\olc(1)=\pcbond$, 
\item $\olc$ is Lipschitz continuous and strictly decreasing on $[\pcsite,1]$.
\end{letlist}

We claim that the \modeltwo\ with parameters $(p,\lam)$ is stoch\-astically greater than
the mixed percolation model with parameters $(p,\lam)$.
To see this, consider the \modeltwo\ with parameters $(p,\lam)$, and let $G$ be the subset of blue edges with
the property that both their endvertices are occupied.  The law of $G$ is 
that of the mixed percolation model, and  the stochastic ordering
follows from the fact that $G\subseteq B$. 
It follows in particular that there exists a.s. an infinite blue cluster when $\lam>\olc(p)$.
\end{proof}

\section{Proofs of Theorem \ref{thm:compass}(\textup{i}) and Theorem \ref{thm:indep}\textup{(ii)}}
\label{sec:6}

We first prove the a.s.\ uniqueness of the infinite blue cluster in the
\modelone, when such exists.
The probability measure governing the \modelone\ may be constructed as follows.
As before, we write $\pp$ for product measure with density $p$ on $\Om_V$. 
For each $v \in \ZZ^d$, we choose a random coordinate direction having law the
product measure  $\Pi= \prod_{v\in \ZZ^d} \pi$, where $\pi$ is 
the uniform probability measure on 
the set $\sD=\{\pm e_i: i=1,2,\dots,d\}$. Let $\Ppp=\M{\pp} \times \Pi$ be the measure
on $\Om_V\times \sD$ that governs the \modelone.  

As explained at the beginning of Section \ref{sec:5},
proofs of uniqueness usually require that the relevant probability law
satisfy the finite-energy property \eqref{fin-en}.
The one-choice measure does not have the 
finite-energy property nor the \emph{positive} finite energy property of \cite{gandolfi}, as the following indicates (working under the annealed measure).   
\begin{example}
In $d=2$ dimensions consider the following local  configuration of edges: 
\begin{itemize}
\item[-] edges in the unit square with the origin $(0,0)$ as the bottom right corner are all blue,
\item[-] edges in the unit square with $(1,0)$ as the bottom left corner are all blue,
\item[-] all other edges incident to these squares (except the edge $e$ from $(0,0)$ to $(1,0)$, which we do not declare the state of) are not blue. 
\end{itemize}
It is an easy exercise to verify that the above configuration of blue and non-blue edges occurs with positive probability.  It also has the property that all vertices in the two squares must be occupied a.s., and that in each square the ``one choice'' made by each vertex gives an oriented cycle around the square.  In particular the occupied vertices $(0,0)$ and $(1,0)$ have a.s.~chosen one of the two edges in their relevant unit square that are incident to it.  It follows that a.s.~the edge $e$ is  not blue.
\end{example}
Notwithstanding, the arguments of \cite{BK} may be adapted (we do so in the proof of the following theorem) to obtain uniqueness
for the \modelone.

\begin{theorem}\label{thm:one-uniq}
Let $d \ge 1$, and let $N$ be the number of infinite blue clusters in the 
\modelone\ on $\ZZ^d$. Either $\Ppp(N=0)=1$
or $\Ppp(N=1)=1$.
\end{theorem}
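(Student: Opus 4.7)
I plan to adapt the Burton--Keane argument \cite{BK,BK2}, exploiting the fact that, although the blue-edge measure lacks finite energy (as the preceding example illustrates), the underlying measure $\Ppp=\pp\times\Pi$ does have finite energy at the level of the site variables: it is a product measure over $\ZZ^d$ of $\mathrm{Bernoulli}(p)\otimes\mathrm{Uniform}(\sD)$, so that for any finite $V\subset\ZZ^d$ any prescribed value of $(\omega_v,\delta_v)_{v\in V}$ is a cylinder event of strictly positive probability that is independent of the configuration on $V^c$. Being a product measure, $\Ppp$ is also mixing under the shift action of $\ZZ^d$, and since $N$ is translation-invariant it is a.s.\ constant, equal to some $k\in\{0,1,\dots,\infty\}$. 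The task reduces to ruling out $2\le k<\infty$ and $k=\infty$.

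For finite $k\ge 2$, the plan is as follows. First, by ergodicity one finds some $n$ for which there is positive probability that $\La_n:=[-n,n]^d\cap\ZZ^d$ meets at least two distinct infinite blue clusters. Then, for $m\gg n$, I would consider the cylinder event $\tilde B_m$ specifying a deterministic inner pattern on $\La_m$: every site of $\La_m$ is occupied; the chosen directions inside $\La_m$ are designed so that $\La_m$ is internally blue-connected, for instance via a spanning tree of unit edges covered by the choices; and the sites on the inner boundary of $\La_m$ are assigned outward directions such that, for each coordinate ray leaving $\La_m$, the truncated segment from $\La_m$ to the first occupied site beyond it is blue via the boundary site's choice. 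The event $\tilde B_m$ has positive probability by the product structure of $\Ppp$ and is independent of the configuration on $\La_m^c$. The contradiction with $N=k$ a.s.\ will then come from the fact that, on the intersection of $\tilde B_m$ with the positive-probability outer event that at least two infinite blue branches reach $\La_m$ through these outward rays, all those branches get absorbed into the single blue cluster of $\La_m$, so $N<k$ on a set of positive probability. For $k=\infty$, the plan is to run the trifurcation argument of \cite{BK}: a similar local site-level modification near $o$ will produce positive-probability trifurcations at the origin, whereupon translation invariance gives $\Theta(n^d)$ expected trifurcations in $\La_n$, while the standard combinatorial lemma forces the number of distinct points on $\partial\La_n$ to be at least that, contradicting $|\partial\La_n|=O(n^{d-1})$.

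The main obstacle in both parts is that a blue edge depends on occupancies and choices along an entire coordinate line, so a bounded site-level modification can simultaneously \emph{create} blue edges inside $\La_m$ and \emph{destroy} the long blue segments crossing $\La_m$ that originally carried pieces of the infinite clusters. The inner pattern $\tilde B_m$ has to be engineered so that, for each coordinate line meeting $\La_m$, the relevant boundary site of $\La_m$ on that line chooses outward, ensuring that any blue segment approaching $\La_m$ along that line from outside is replaced, after modification, by a still-blue truncated segment terminating in $\La_m$. Verifying carefully that this bookkeeping indeed forces the intended merger (respectively trifurcation) for the original infinite clusters, despite the simultaneous destruction of crossing segments, will be the most delicate step.
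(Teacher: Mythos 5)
Your overall strategy is the one the paper uses (Burton--Keane surgery exploiting that $\Ppp$ is a product measure over sites of occupancy--direction pairs, plus ergodicity and the trifurcation count), but the specific surgery you propose cannot be carried out, and this is not a removable technicality. If every site of $\La_m$ is forced to be occupied, then each site's single choice colours exactly one unit edge, so if all inner-boundary sites spend their choice pointing outward, the blue unit edges joining two sites of $\La_m$ number at most $(2m-1)^d$ (at most one per interior site), whereas a blue subgraph connecting all $(2m+1)^d$ sites of $\La_m$ --- in particular attaching every boundary site to the internal hub --- needs at least $(2m+1)^d-1$ such edges. Hence ``internally blue-connected'' and ``all boundary sites choose outward'' are mutually exclusive: most boundary sites end up as dead ends, and an incoming branch whose truncated segment terminates at such a site is not absorbed. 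Moreover a boundary site at an edge or corner of the box is the exit site of several coordinate rays but has only one choice, so it cannot save every approaching segment whose blue status was due solely to a choice made inside $\La_m$. Since you cannot control through which boundary sites, nor by whose choice, the two (or three) infinite clusters reach the box, the claimed merger --- and hence $N<k$ with positive probability, or the trifurcation in part C --- does not follow from the pattern $\tilde B_m$ as described; indeed no pattern with those two properties exists.

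The paper's surgery does essentially the opposite of a dense pattern: it empties $D_m$ except for the carefully chosen boundary vertices $z_j$, selected to lie on infinite blue paths meeting $D_m$ in a single segment; only those $z_j$ are redirected outward (preserving their connections to infinity), and they are then joined inside $D_m$ by sparse paths of occupied vertices separated by unoccupied stretches, so that one choice colours a long blue segment and the edge-count obstruction disappears. Two further ingredients in the paper are also missing from your outline and are needed. First, the localization event $F_{m,n}$ (no blue segment meets both $D_m$ and $\ZZ^d\setminus D_n$) confines all collateral changes to the finite box $D_n$ and makes the positive-probability step legitimate; note that your outer event ``two infinite branches reach $\La_m$'' is not measurable with respect to the configuration off $\La_m$, since feasibility of segments crossing the box depends on occupancy inside it, so the independence you invoke needs exactly this kind of care. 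Second, the dichotomy used after the first stage of the paper's surgery (either the emptying step already changes $N$ with positive probability, which is itself a contradiction, or a.s.\ it does not) disposes of the possibility that cutting segments which cross the box splits or rearranges distant clusters --- the issue you correctly flag as delicate but do not resolve. If you re-engineer your surgery along these lines (target specific entry vertices rather than all of $\pd\La_m$, keep the interior sparse, localize via $F_{m,n}$, and add the dichotomy), your argument becomes the paper's proof; as written, the key local construction fails.
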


\begin{proof}
We follow \cite{BK}, in the style of \cite[Sect.\ 8.2]{G99}. There are three 
statements to be proven, as follows:
\begin{Alist} 
\item $N$ is $\Ppp$-a.s.\ constant,
\item $\Ppp(N\in\{0,1,\oo\})=1$,
\item $\Ppp(N=\oo)=0$.
\end{Alist}

\smallskip
\noindent
\emph{Proof of \emph{A}.}
The probability measure $\Ppp$ is a product measure on $\Om_V \times \sD$,
and is therefore ergodic. The random variable $N$ is a translation-invariant function
on $\Om_V\times \sD$, and therefore $N$ is $\Ppp$-a.s.\ constant.

\smallskip
\noindent
\emph{Proof of \emph{B}.}
Suppose $\Ppp(N=k)=1$ for some $2\le k<\oo$, 
from which assumption we will obtain a contradiction.
As in \cite{G99}, we  work with
boxes of $\ZZ^d$ in the $L^1$ metric. For $m \ge 1$, let 
$$
D_m=\left\{x=(x_1,x_2,\dots, x_d)\in \ZZ^d: \sum_i |x_i| \le m\right\},
$$
considered 
as a   subgraph of the infinite lattice. The \emph{boundary}
of $D_m$ is the set $\pd D_m=D_m \setminus D_{m-1}$.

Let $N_m$ be the number of infinite blue clusters that intersect $D_m$.
We choose $m <\oo$ such that  
\begin{equation}\label{eq:inf1}
\Ppp(N=k,\, N_m\ge 2)>0.
\end{equation}
For $m<n$, let $F_{m,n}$ be the event that there exists no blue segment 
that intersects
both $D_m$ and $\ZZ^d\setminus D_n$.
Since all blue segments intersecting $D_m$ are a.s.\ finite,
for $\eps>0$ there exists $n>m$ such that $\Ppp(F_{m,n})>1-\eps$. This
we combine with \eqref{eq:inf1}, with sufficiently small $\eps$, to deduce the
existence of $n>m$ such that
\begin{equation}\label{eq:inf2}
\Ppp(N=k,\, N_m\ge 2,\, F_{m,n})>0.
\end{equation} 

On the event that $N_m \ge 2$,  we let $I_1$, $I_2$ be distinct infinite blue clusters that 
intersect $D_m$.  
For $j=1,2$, choose a point $z_j\in I_j\cap \pd D_m$ with the property that $z_j$ lies
in an infinite  blue path $J_j$ that traverses a sequence of 
sub-intervals of blue segments only one of which, denoted $f_j$, 
intersects  $D_m$ (the point of intersection being  $z_j$). We denote
by $b_j=( y_j,z_j)$ the edge of $f_j$ 
that is incident with $z_j$ and lies outside $D_m$ and such that $y_j$ lies in an infinite blue path that is a subset of $J_j \setminus D_m$; 
if there exists a choice for $b_j$,  we choose one according to some fixed but arbitrary rule.
See Figure \ref{fig:uniq}.

\begin{figure}
	\includegraphics[width=0.6\textwidth]{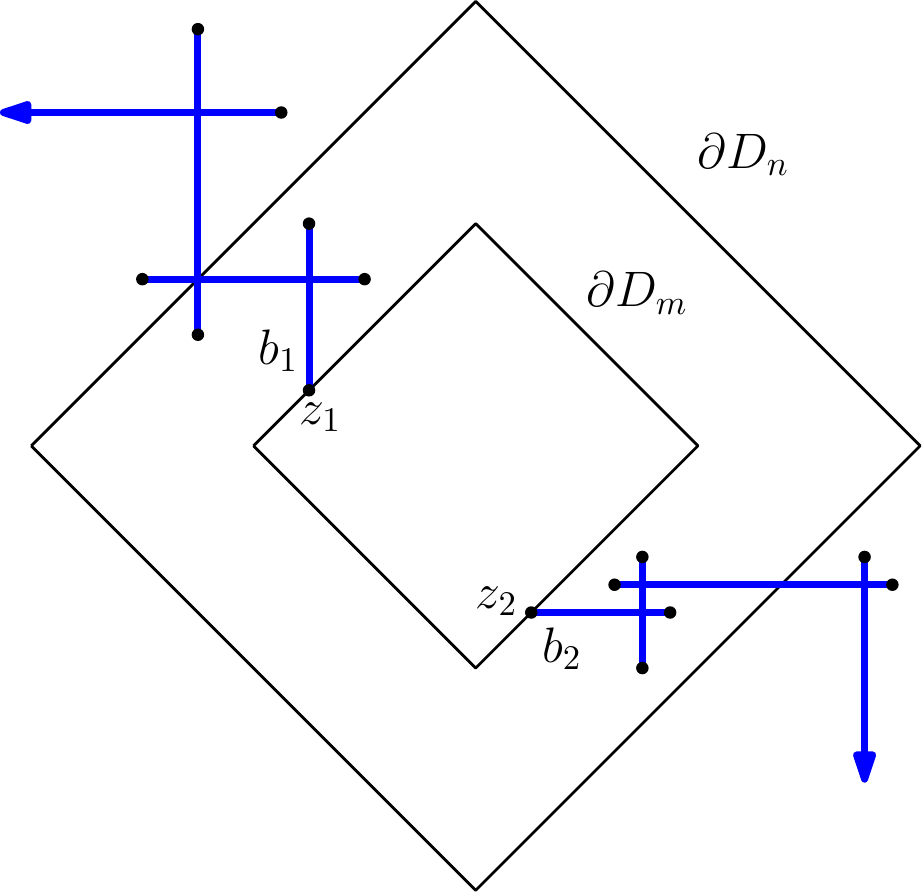}
	\caption{An illustration of the first part of the  proof of statement B in 
	Theorem \ref{thm:one-uniq}.
	%  The dashed lines indicate the blue
%	segments created within $D_m$ to connect $z_1$ and $z_2$.
	The arrows indicate connections to infinity.
	}
	\label{fig:uniq}
\end{figure}

We now perform surgery within the larger box $D_n$, by altering the configuration of occupied sites and of chosen directions. This is done in three stages.

First, we designate each $z_j$ occupied (if not already occupied),
and we set its chosen direction as pointing outwards from $D_m$ along the
edge $b_j$, and we write $f_j'$ for the blue segment  containing $b_j$ in
the ensuing graph.  
If $z_j$ was not previously occupied, this may have the consequence of 
removing part of $f_j$ from the blue graph. For any blue segment 
$f\ne f_1',f_2'$, with
endvertices $u$, $v$, that
contains a point in $D_m$, we do the following.
\begin{letlist}
\item   If $u\in D_m$ (\resp, $v\in D_m$) and $u\ne z_1,z_2$
(\resp, $v\ne z_1,z_2$) we designate it  unoccupied.  
\item  If $u\notin D_m$ (\resp, $v \notin D_m$) is the endvertex of no blue segment 
other than $f$, we designate it unoccupied.
\item If $u\notin D_m$ (\resp, $v \notin D_m$) is the endvertex of some blue segment 
$f'\ne f$ such that $f'\subseteq \ZZ^d\setminus D_m$, we keep 
$u$ (\resp, $v$)  occupied and have it choose its direction along $f'$. 
\item If $u\notin D_m$ (\resp, $v \notin D_m$) is the endvertex of no blue segment 
$f'\ne f$ such that $f'\subseteq \ZZ^d\setminus D_m$, but is the endpoint of some 
$f''\ne f$ such that $f''\cap D_m \ne \es$, we designate it unoccupied.
\end{letlist}
At the completion of this stage, all vertices of $D_m$ except 
$z_1$ and $z_2$ are unoccupied, and moreover $z_1$ and $z_2$ are 
the only vertices of $D_m$ belonging to a blue segment.
We have not added any blue edges, but some may have been removed.  Thus
the clusters of $z_1$ and $z_2$ are still (infinite and) disjoint.  If with
positive probability this procedure gives a new $N\ne k$ then we have
obtained a contradiction since we have shown that $\mathbb{P}_{p,\lambda}(N\ne
 k)>0$ (we have only changed the states of sites and their chosen
directions within $D_n$).  Otherwise almost surely $N$ is still equal to $k$
after
this procedure.

\begin{figure}
%	\includegraphics[width=0.6\textwidth]{}
%	\caption{(i) The upper picture is an illustration of the creation of a blue path from $z_1$ to $z_2$ when the
%	graph-geodesic $\gamma$ has some number $r \ge 1$ of changes of direction.
%	(ii) The lower picture illustrates the case $r=0$.}

\begin{tikzpicture}
	\tikzset{vert/.style={circle, fill=black, draw=black, inner sep=1pt, line width=0.5pt}}
	\draw (0,0) -- (4,4) -- (8,0) -- (4,-4) -- cycle;
	\node at (1,-1) [vert, label={[label distance=-5pt] below left:{$z_1$}}] {};
	\node at (6,2) [vert, label={[label distance=-5pt] above right:{$z_2$}}] {};
	\draw [line width=1.5pt, blue] (3,0) node [vert] {} -- (5,0) node [vert] {};
	\draw [line width=1.5pt, blue, decoration={markings, mark=at position 0.7 with {\arrow{latex}}}, postaction=decorate] (3.5,0.5) node [vert] {} -- (3.5,-1) node [vert] {};
	
	\draw [line width=1.5pt, blue, decoration={markings, mark=at position 0.5 with {\arrow{latex}}}, postaction=decorate] (3.5,-1) node [vert] {} -- (1,-1) node [vert] {};
	\draw [line width=1.5pt, blue, decoration={markings, mark=at position 0.7 with {\arrow{latex}}}, postaction=decorate] (4.5,-0.5) node [vert] {} -- (4.5,2) node [vert] {};
	\draw [line width=1.5pt, blue, decoration={markings, mark=at position 0.5 with {\arrow{latex}}}, postaction=decorate] (4.5,2) node [vert] {}-- (6,2) node [vert] {};
	\end{tikzpicture}
\caption{An illustration of the second part of the proof of B in Theorem \ref{thm:one-uniq}.}
\label{fig:uniq21}

\end{figure}

%On the event $\{N_m\ge 2\}\cap F_{m,n}$, the states of sites and their chosen 
%directions have been changed only within $D_n$.
We now further alter the configuration within $D_m$ in order to
create a blue connection between $z_1$ and $z_2$. 
The following explanation is  illustrated in Figure \ref{fig:uniq21}.

Recall that the origin $o$ is the central vertex of $D_m$ and let $c_\pm = o\pm2e_1$. Set $c_+$ and $c_-$ to be occupied with their chosen directions pointing towards each other, so that the segment $S$ of four edges between them is blue. We now construct two disjoint paths which meet this segment and end at $z_1$ and $z_2$. This occurs in two steps.
\begin{itemize}
	\item[(1)] Choose one of the three points $\{o-e_1,o,o+e_1\}$ to connect to $z_1$, and another to connect to $z_2$. Call them $s_1$ and $s_2$. (For $m$ sufficiently large the choice does not matter, but a judicious choice may lead to simpler paths.)
	\item[(2.i)] Path $\rho_1$ is comprised of occupied vertices $\rho_{1,0}, \rho_{1,1},\dots,\rho_{1,k} (= z_1)$ and the unoccupied vertices between them. It starts at one of the $2d-2$ vertices $s_1\pm e_i$, $i\in\{2,\dots,d\}$, meeting $S$ only at the vertex $s_1$ (between $\rho_{1,0}$ and $\rho_{1,1}$). For $j=0,\dots,k-1$, each occupied $\rho_{1,j}$ has chosen direction pointing towards $\rho_{1,j+1}$.
	\item[(2.ii)] Path $\rho_2$ is comprised of occupied vertices $\rho_{2,0}, \rho_{2,1},\dots,\rho_{2,l} (= z_2)$ and the unoccupied vertices between them. It starts at one of the $2d-2$ vertices $s_2\pm e_i$, $i\in\{2,\dots,d\}$, meeting $S$ only at the vertex $s_2$ (between $\rho_{2,0}$ and $\rho_{2,1}$). For $j=0,\dots,l-1$, each occupied $\rho_{2,j}$ has chosen direction pointing towards $\rho_{2,j+1}$.
\end{itemize}
It may be checked that this construction is always possible for $m\geq5$, and that we have a unique path of blue edges from $z_1$ to $z_2$.

In summary, by altering the configuration within $D_n$,
the number of infinite blue clusters can be reduced by one. 
Since $D_n$ contains boundedly many
vertices, we deduce from \eqref{eq:inf2} that
$\Ppp(N=k-1)>0$, in contradiction of the assumption that $\Ppp(N=k)=1$.
Statement B follows.

\smallskip
\noindent
\emph{Proof of \emph{C.}}
Following \cite{BK}, the idea is to assume that $\Ppp(N\ge 3)=1$, to find \emph{three}
sites $z_1,z_2,z_3\in \pd D_m$
connected to disjoint infinite blue clusters of $\ZZ^d\setminus D_m$,
and then to perform surgery, as in the proof of B above, 
to show that $D_m$ contains, with strictly positive probability,
a vertex $t$ whose deletion breaks an infinite blue cluster into three 
disjoint infinite blue clusters. Such a vertex $t$ is called a \emph{trifurcation},
and a neat argument due to Burton and Keane \cite{BK},
based on the translation-invariance and polynomial growth of $\ZZ^d$, shows the
impossibility of their existence with strictly positive probability.
This provides the contradiction from which statement C follows.
It suffices, therefore, to show that, if $\Ppp(N\ge 3)=1$, then
there is a strictly positive probability of the existence of a trifurcation.   
It is not unusual in the related literature to close the proof with the above sketch, 
but we continue with some details. We hope that the outline explanation presented here 
will convince readers without burdening them  
with too many details. 

Fix $m \ge 1$; we shall require later that $m\ge M$ for some absolute constant $M$.
As in \eqref{eq:inf2}, we find $n>m$ such that
$$
\Ppp(N_m\ge 3,\ F_{m,n})>0,
$$
and, on the last event, we find distinct sites $z_1,z_2,z_3\in \pd D_m$
connected to disjoint infinite blue clusters of $\ZZ^d\setminus D_m$.
By altering the configuration on $D_n$ as in the proof of statement B,
we arrive at an event, with strictly positive probability, on which: (i)
the $z_i$ lie in infinite blue paths of 
$\ZZ^d\setminus D_m$, (ii) apart from the $z_i$, every site in $D_m$ is unoccupied, and (iii)
$D_m$ contains no blue edge.

The rest of this proof is devoted to describing how to alter the configuration inside $D_m$ 
in order to create a trifurcation. It is a simple generalisation of the proof of B above. We construct the same segment $S$ as before. Then instead of constructing two disjoint paths $\rho_1$ and $\rho_2$, we construct three disjoint paths $\rho_1,\rho_2$ and $\rho_3$. We begin in a similar way.
\begin{itemize}
	\item[(1)] Choose one of the three points $\{o-e_1,o,o+e_1\}$ to connect to $z_1$, another to connect to $z_2$, and the third to connect to $z_3$. Call them $s_1, s_2$ and $s_3$. (Again, for $m$ sufficiently large the choice does not matter, but a judicious choice may lead to simpler paths.)
	\item[(2.i--ii)] See the proof of B.
	\item[(2.iii)] Path $\rho_3$ is comprised of occupied vertices $\rho_{3,0}, \rho_{3,1},\dots,\rho_{3,h} (= z_3)$ and the unoccupied vertices between them. It starts at one of the $2d-2$ vertices $s_3\pm e_i$, $i\in\{2,\dots,d\}$, meeting $S$ only at $s_3$ (between $\rho_{3,0}$ and $\rho_{3,1}$). For $j=0,\dots,h-1$, each occupied $\rho_{3,j}$ has chosen direction pointing towards $\rho_{3,j+1}$.
\end{itemize}
It may be checked that this construction is always possible for $m\geq7$. The blue edges contain unique paths between $z_1,z_2$ and $z_3$, and the origin $o$ is a trifurcation.

\begin{figure}

\begin{tikzpicture}
	\tikzset{vert/.style={circle, fill=black, draw=black, inner sep=1pt, line width=0.5pt}}
	\tikzset{tri/.style={circle, fill=red, draw=red, inner sep=1.5pt, line width=0.5pt}}
	\draw (0,0) -- (4,4) -- (8,0) -- (4,-4) -- cycle;
	\node at (1,-1) [vert, label={[label distance=-5pt] below left:{$z_1$}}] {};
	\node at (6,2) [vert, label={[label distance=-5pt] above right:{$z_2$}}] {};
	\node at (7,1) [vert, label={[label distance=-5pt] above right:{$z_3$}}] {};
	\draw [line width=1.5pt, blue] (3,0) node [vert] {} -- (5,0) node [vert] {};
	
	\draw [line width=1.5pt, blue, decoration={markings, mark=at position 0.7 with {\arrow{latex}}}, postaction=decorate] (3.5,0.5) node [vert] {} -- (3.5,-1) node [vert] {};
	\draw [line width=1.5pt, blue, decoration={markings, mark=at position 0.5 with {\arrow{latex}}}, postaction=decorate] (3.5,-1) node [vert] {} -- (1,-1) node [vert] {};
	
	\draw [line width=1.5pt, blue, decoration={markings, mark=at position 0.7 with {\arrow{latex}}}, postaction=decorate] (4.5,-0.5) node [vert] {} -- (4.5,2) node [vert] {};
	\draw [line width=1.5pt, blue, decoration={markings, mark=at position 0.5 with {\arrow{latex}}}, postaction=decorate] (4.5,2) node [vert] {}-- (6,2) node [vert] {};
	
	\draw [line width=1.5pt, blue, decoration={markings, mark=at position 0.7 with {\arrow{latex}}}, postaction=decorate] (4,0.5) node [vert] {} -- (4,-1);
	\draw [line width=1.5pt, blue, decoration={markings, mark=at position 0.5 with {\arrow{latex}}}, postaction=decorate] (4,-1) node [vert] {} -- (6,-1);
	\draw [line width=1.5pt, blue, decoration={markings, mark=at position 0.5 with {\arrow{latex}}}, postaction=decorate] (6,-1) node [vert] {} -- (6,1);
	\draw [line width=1.5pt, blue, decoration={markings, mark=at position 0.5 with {\arrow{latex}}}, postaction=decorate] (6,1) node [vert] {} -- (7,1) node [vert] {};
	
	\node at (4,0) [tri] {};
\end{tikzpicture}
	\caption{An illustration of the proof of C in Theorem \ref{thm:one-uniq}. The central vertex (indicated in red) is a trifurcation.}
	\label{fig:uniq6}
\end{figure}
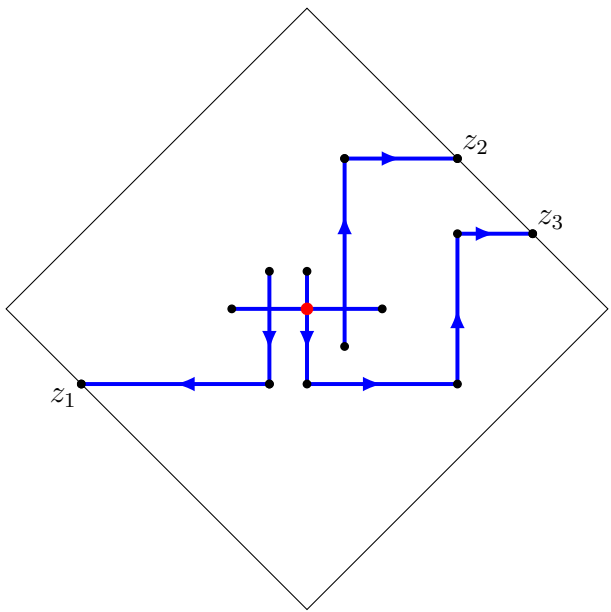

This construction is illustrated in Figure \ref{fig:uniq6}, and the proof is complete.
\end{proof}

\begin{comment}
\begin{figure}
	\includegraphics[width=0.6\textwidth]{}
	\caption{Some special cases that may arise in the proof
	of statement C: (left) when $z_1$ and $z_2$ are close to an apex,  
	(right) when $z_1$, $z_2$, $z_3$ are close to an apex,
	(bottom) when $z_3 \in \beta$.}
	\label{fig:uniq3}
\end{figure}
\end{comment}

We turn now to the proof of the existence of an infinite
blue cluster, subject to appropriate conditions.
Let $\om\in\Om_V$, and let $F(\om)$ be the set of feasible pairs.
The configuration space is 
$\Phi(\om)=\{0,1\}^{F(\om)}$, and for $\phi\in\Phi(\om)$, we call $f\in F(\om)$ (and the corresponding segment of $\ZZ^d$) 
blue if $\phi_f=1$.
We shall consider  probability measures $\mu$ on $\Phi(\om)$ that satisfy a somewhat less restrictive  condition
than those of the \modelone\ and the \modeltwo.

Let $\mu$ be a probability measure on $\Phi(\om)$ satisfying the following conditions:
\begin{Alist}
\item [C1.]
for any family $\{f_i=(u_i,v_i): i\in I\}$
of site-disjoint, feasible 
pairs, the events  
$\{f_i\text{ is blue}\}$, $i\in I$, are independent, and
\item[C2.]
there exists $\la=\la(\mu)\in(0,1)$ such that, for all $f \in F(\om)$, we have
\begin{equation}\label{eq:1}
\mu(f\text{ is blue})=\la.
\end{equation}
\end{Alist} 
Note that both the \modelone\ and the  \modeltwo\  satisfy C1 and C2 above.

Let $B$ be the set of  blue edges, so that $B$ gives rise to a subgraph
of $\ZZ^d$ (also denoted $B$) with vertex-set $\ZZ^d$ and edge-set $B$. 
We write $\PPpm$ for the law of $B$, and let $\la$ be
defined by \eqref{eq:1}.

% We note in the case of the \modeltwo\ that
%\begin{equation}\label{eq:2}
%\text{for given $p\in(0,1)$, $\PPpm$ is stochastically increasing in $\mu$.} 
%\end{equation}

Let $\psi(p,\mu)$ be the probability that $B$ contains an infinite cluster.
Here is the main result of this section.

\begin{theorem}\label{thm:1}
Let $d=2$ and suppose $\mu$ satisfies C1 and C2 above. There exists an absolute constant $c>0$ such that,
if $p=1-q\in(0,1)$ and $\lam>c\log(1/q)$, then $\psi(p,\mu)=1$.
\end{theorem}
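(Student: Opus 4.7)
The plan is to prove Theorem~\ref{thm:1} by a block renormalization argument: we construct a $k$-dependent site percolation process on a coarsening of $\ZZ^2$ whose \emph{open} sites correspond to \emph{good} blocks, show that the marginal probability of goodness tends to $1$ as $c\to\infty$, and invoke the Liggett--Schonmann--Stacey theorem so that for $c$ large it dominates a supercritical Bernoulli site percolation on $\ZZ^2$. The good event is engineered so that adjacent good blocks automatically lie in a common blue cluster, whence an infinite good-block cluster produces an infinite blue cluster and $\psi(p,\mu)=1$.

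Set $K=\lfloor 1/(3\log(1/q))\rfloor$ (so $K\log(1/q)\asymp 1$ in the relevant regime of small $p$), partition $\ZZ^2$ into blocks $B_{i,j}=[iK,(i+1)K]\times[jK,(j+1)K]$, and write $\tilde B_{i,j}=[(i-2)K,(i+3)K]\times[(j-2)K,(j+3)K]$. Declare $B_{i,j}$ \emph{good} if both of the following hold:
\begin{romlist}
\item some blue horizontal feasible pair has endpoints $(a,y^\ast)$ and $(b,y^\ast)$ satisfying $y^\ast\in[jK,(j+1)K]$, $a\in[(i-2)K,(i-1)K]$ and $b\in[(i+2)K,(i+3)K]$;
\item some blue vertical feasible pair is analogously placed on a column $x^\ast\in[iK,(i+1)K]$.
\end{romlist}
Each such segment spans $B_{i,j}$ in its coordinate direction, so the two meet at the lattice point $(x^\ast,y^\ast)\in B_{i,j}$ and merge in one blue cluster. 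Moreover, the good event is determined by the site configuration in $\tilde B_{i,j}$ and by the blue labels of those feasible pairs whose endpoints both lie in $\tilde B_{i,j}$. By C1, together with the product structure of $\pp$, the family $(\id_{G_{i,j}})$ is therefore $k$-dependent for some fixed $k$ (about $5$).

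The main quantitative step is the lower bound on $P(G_{i,j})$. For fixed $y\in[jK,(j+1)K]$, the probability of the required occupancy pattern on row $y$ --- no occupied site in $((i-1)K,(i+2)K)$, and at least one occupied site in each of the flanking $K$-blocks --- factors into three independent site-events with probabilities $q^{3K-1}$, $1-q^{K+1}$ and $1-q^{K+1}$, each bounded below by a positive absolute constant since $q^{3K}\ge 1/e$ and $q^K$ is bounded away from $1$ by the choice of $K$. By C2, the resulting spanning feasible pair is blue with conditional probability $\lambda$, so row $y$ contributes with probability at least $c_1\lambda$. The events for different rows $y$ concern site-disjoint feasible pairs and so are independent by C1, giving
\[
P(\text{no horizontal crossing in }B_{i,j})\le(1-c_1\lambda)^K\le\exp(-c_1\lambda K)\le\exp(-c_2c),
\]
where $\lambda K\ge c/3$ follows from $\lambda>c\log(1/q)$ and the definition of $K$. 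The vertical event is symmetric, and a union bound yields $P(G_{i,j}^c)\le 2\exp(-c_2c)\to 0$ as $c\to\infty$.

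Finally, if $B_{i,j}$ and $B_{i+1,j}$ are both good, then the horizontal crossing of $B_{i,j}$ runs along row $y^\ast_{i,j}$ to $x$-coordinate at least $(i+2)K$, while the vertical crossing of $B_{i+1,j}$ sits on column $x^\ast_{i+1,j}\in[(i+1)K,(i+2)K]$ and covers the rows $y\in[(j-1)K,(j+2)K]\ni y^\ast_{i,j}$; the two segments therefore share the lattice point $(x^\ast_{i+1,j},y^\ast_{i,j})$ and merge in a single blue cluster. Vertical adjacency is analogous, so the good-block subgraph is contained in one blue cluster, and any infinite good-block cluster yields an infinite blue cluster. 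Choosing $c$ large enough that $P(G_{i,j})$ is so close to $1$ that Liggett--Schonmann--Stacey produces stochastic domination of a supercritical Bernoulli site percolation on $\ZZ^2$, one concludes that the good blocks percolate almost surely, and hence $\psi(p,\mu)=1$. The principal subtlety is the calibration of $K$, which must be large enough to provide many parallel trials within each block but small enough that the spanning probability $q^{3K}$ does not collapse; the balance $K\asymp 1/\log(1/q)$ is exactly what forces the $c\log(1/q)$ factor in the hypothesis.
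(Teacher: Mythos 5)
Your proposal is correct and follows essentially the same route as the paper: a block renormalisation with block side calibrated so that $q^{K}$ is of constant order, crossing events whose failure probability is bounded by $\exp(-c'\lambda K)$ via independent row/column trials using C1--C2, domination of the resulting finite-range-dependent field of good blocks by supercritical Bernoulli site percolation via Liggett--Schonmann--Stacey, and geometric gluing of the crossings into a single infinite blue cluster. The only differences are cosmetic: you use single long crossings that reach two blocks into the neighbours (giving $k$-dependence with $k\approx 5$), whereas the paper builds its good event from four in-box arms plus four box-to-box connector segments, which makes it $1$-dependent; both are equally serviceable inputs to the domination theorem.
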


Let $d \ge 2$ and $\ZZ^2_d:=\{0\}^{d-2}\times \ZZ^2$, considered as a subgraph of $\ZZ^d$.
Theorem \ref{thm:1} has the following consequences.

\begin{Alist}
\item[(1)] \emph{The \modelone\ on $\ZZ^d$ with $d\ge 2$.} 
The \modelone\ on $\ZZ^d$, when restricted to $\ZZ^2_d$, is no longer a  
\modelone\ (for example, there exist a.s.\ occupied sites with no incident blue edge).  
Nevertheless, the restricted process satisfies C1 and C2, above, with parameter $\la=\la(d)>0$ as given in \eqref{lambda_def_choice}.
By Theorem \ref{thm:1},
if $\la(d)>c\log(1/q)$, then $\ZZ^2_d$ contains an infinite blue cluster a.s., and  therefore so does $\ZZ^d$.
The condition on $\la(d)$ amounts to assuming $p<p_0(d)$ for some $p_0(d)>0$. 
Theorem \ref{thm:compass}(i) is proved.

\item[(2)] \emph{The \modeltwo.}
In this case, $\la>0$ is a parameter of the model. For $d\ge 2$, the model restricted to $\ZZ^2_d$ is the
two-dimensional \modeltwo\ with the same value of $\la$.  By Theorem \ref{thm:1},
$\ZZ^2_d$, and hence $\ZZ^d$ also, contains an infinite blue cluster a.s.\
whenever $\la>c\log(1/q)$. This proves Theorem \ref{thm:indep}(ii).

\end{Alist}

\begin{proof}[Proof of Theorem \ref{thm:1}.]
The proof is by comparison with a  supercritical, $1$-dependent site
percolation process on $\ZZ^2$, and proceeds via a block argument
not dissimilar to that used in \cite{BGN,GM}. The idea
is as follows. Let $p\in(0,1]$. We partition $\ZZ^2$ into blocks of
given side-length $6r$ where $r=r(p)$ satisfies $r(p)\to\oo$ as $p\downarrow 0$, and 
certain blocks will be called \lq good' (the meaning of this will be explained).
Then we show that the set of good blocks dominates (stochastically)
a $1$-dependent site percolation process with a density that
can be made close to $1$
by making $p$ sufficiently small (and $r$ correspondingly large). Finally, we show that,
if there is an infinite cluster of good blocks, then there is necessarily an infinite
blue cluster in the original lattice $\ZZ^2$.

Write $e_1=(1,0)$, $e_2=(0,1)$, 
so that the 
four neighbours of $x \in \ZZ^2$ are $x\pm e_i$, $i=1,2$.
Let $p=1-q\in(0,1]$.
We begin by defining the relevant block events. Let $r \ge 1$, to
be chosen later, and let 
$\Lazz=[-3r,3r]^2 \cap \ZZ^2$. For $x=(x_1,x_2)\in\ZZ^2$, let
$\La_x=6rx+\La_o$, the translate of $\Lazz$ by $6rx$. 
The boxes $\{\La_x: x \in \ZZ^2\}$ form a square paving of $\ZZ^2$.

\begin{figure}
	\includegraphics[width=0.4\textwidth]{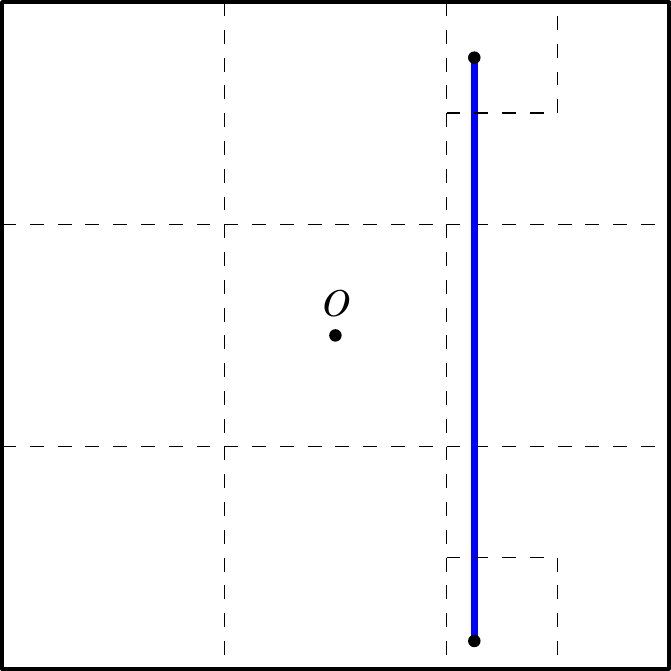}\q\includegraphics[width=0.4\textwidth]{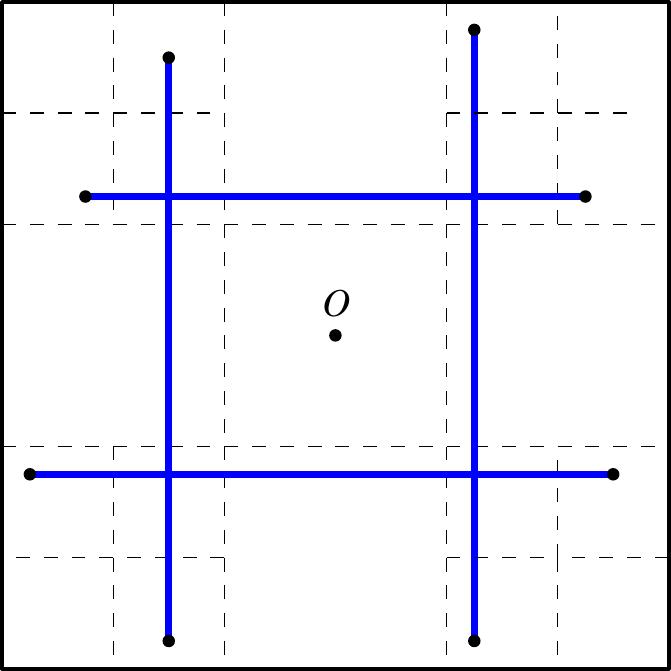}
	\caption{The events $A_{e_1}$ and $A(o)$.}
	\label{fig:event1}
\end{figure}

Consider first the special case $x=o$. Let
$$
S_1=[r,2r)\times[2r,3r),\q
S_2=[r,2r)\times(-3r,-2r],
$$
as illustrated in Figure \ref{fig:event1},  and let
$A_{e_1}$ be the event that there exist occupied vertices $s_1\in S_1$,
$s_2\in S_2$, such that the unordered pair $(s_1,s_2)$ is feasible and blue.
The probability of $A_{e_1}$ may be calculated as follows.
Let $L$ be the line-segment $[r,2r)\times\{0\}$. For $(k,0)\in L$,
let $D_k$ be the event that 
\begin{letlist}
\item  when proceeding north from $(k,0)$, the first occupied vertex $v$
encountered (including possibly $(k,0)$ itself) is at distance between $2r$ and $3r-1$ from $L$,  and
\item when proceeding south from $(k,0)$, the first occupied vertex $w$
encountered (excluding $(k,0)$) is at distance between $2r$ and $3r-1$ from $L$, and
\item the feasible pair $(v,w)$ is blue.
\end{letlist}
Now, $A_{e_1}$ is the disjoint union of the $D_k$ for $(k,0)\in L$.
Therefore,
\begin{align}\label{eq:3}
1-\PPpm(A_{e_1}) &=\bigl[1-\la (q^{2r}-q^{3r})(q^{2r-1}-q^{3r-1})\bigr]^r\\
&=\left[1-\frac \la q [q^{2r}(1-q^r)]^2\right]^r.\nonumber
\end{align}
We choose $r=r(p)$ to satisfy
\begin{equation}\label{eq:4}
q^{r}=\tfrac12,\qq \text{that is}\q r=\frac1{\log_2(1/q)}.
\end{equation}
Note that when $p$ is small, $q$ is close to 1,
and so $r$ is large.  A small correction is necessary in order that $r$ be an integer, but we shall
overlook this for ease of notation. By \eqref{eq:3}, there exists an absolute constant
$c_1>0$ such
that
\begin{equation}\label{eq:5}
1-\PPpm(A_{e_1}) \le e^{-c_1\la r}.
\end{equation}

Let $A_{-e_2}, A_{-e_1}, A_{e_2}$ be the 
respective events corresponding to $A_{e_1}$ after
$\Lazz$ has been rotated clockwise by multiples of $\pi/2$. 
By \eqref{eq:5} and symmetry,
\begin{equation}\label{eq:6}
1-\PPpm(A(o)) \le 4e^{-c_1\la r},
\end{equation}
where the event $A(o):=A_{e_1} \cap A_{-e_2}\cap A_{-e_1}\cap A_{e_2}$ 
is illustrated 
in Figure \ref{fig:event1}. On $A(o)$, there exists a connected blue 
subgraph with large diameter in the box $\Lazz$.

\begin{figure}
	\includegraphics[width=0.8\textwidth]{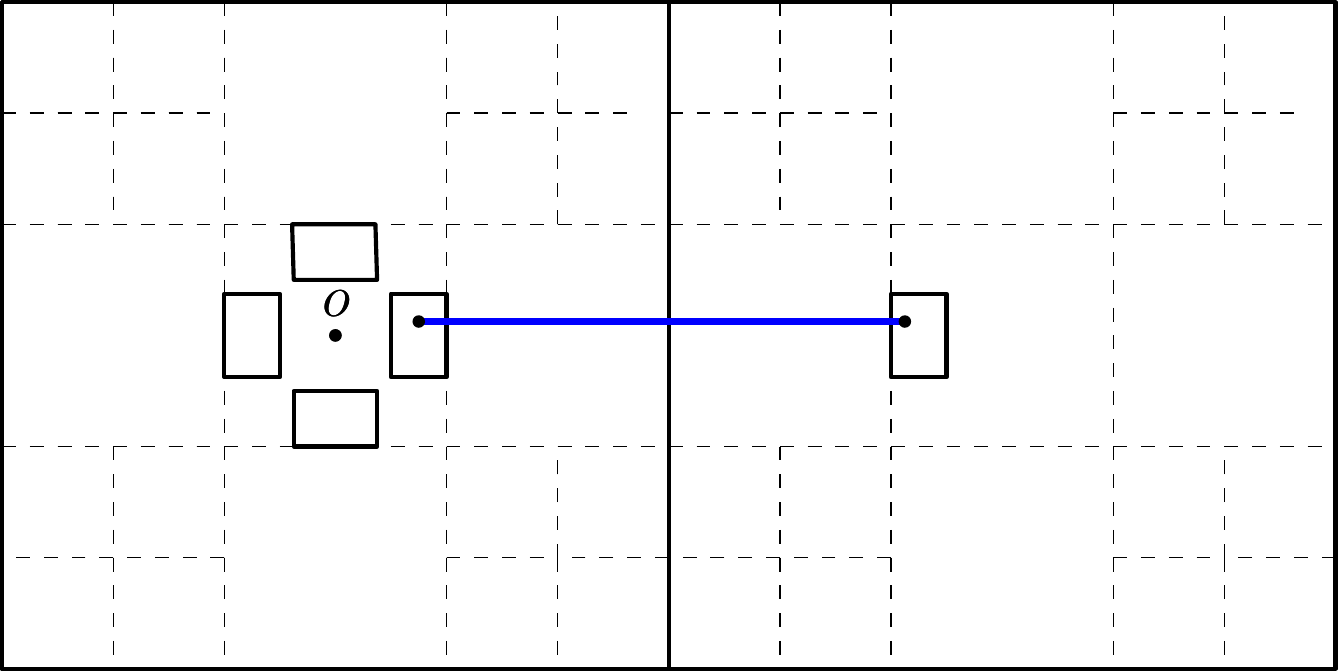}
	\caption{The regions $T_{e_i}$, $6re_1+T_{-e_1}$, and the event $C_{e_1}$. }
	\label{fig:Tregions}
\end{figure}

We turn next
to blue connections between neighbouring boxes, beginning
with connections between $\Lazz$ and $\Laoz$.
Let 
\begin{align*}
T_{e_1}=(\tfrac13 r,r]\times [-\tfrac13 r,\tfrac13r],\q
&T_{-e_2}=[-\tfrac13 r,\tfrac13 r]\times [-r, -\tfrac13 r),\\
T_{-e_1}=[-r,-\tfrac13 r) \times [-\tfrac13 r,\tfrac13r],\q
&T_{e_2}= [-\tfrac13 r,\tfrac13 r]\times (\tfrac13 r,r] ,
\end{align*}
as in Figure \ref{fig:Tregions},  noting that the $T_{\pm e_i}$ are disjoint; 
we assume for simplicity of notation that
$r$ is a multiple of $3$. Let $C_{e_1}$ be the event
that there exists a blue, feasible pair $f=(u,v)$ with $u\in T_{e_1}$, 
$v\in 6r e_1+T_{-e_1}$.  Similar to \eqref{eq:3}--\eqref{eq:5} we have that
\begin{align}\label{eq:7}
1-\PPpm(C_{e_1}) &= \left[1-\frac \la q [q^{2r}(1-q^{2r/3})]^2\right]^{2r/3}\\
&\le e^{-c_2\la r},\nonumber
\end{align}
for some absolute constant $c_2>0$.
Let $C_{\pm e_i}$ be defined as $C_{e_1}$ but with respect to blue connections 
between $T_{\pm e_i}$ and $\pm 6r e_i+T_{\mp e_i}$. By symmetry, \eqref{eq:7}
holds with $C_{e_1}$ replaced by $C_{\pm e_i}$, and by \eqref{eq:7} the event
$C(o):=C_{e_1} \cap C_{-e_2}\cap C_{-e_1}\cap C_{e_2}$ (depicted in Figure \ref{fig:good_event}) satisfies
\begin{equation}\label{eq:9}
1-\PPpm(C(o)) \le 4e^{-c_2\la r},
\end{equation}

We are now ready to define the block event associated with the vertex $o$.
We call $o$ \emph{good} if the event $A(o) \cap C(o)$ occurs (see Figure \ref{fig:good_event}).
By \eqref{eq:5} and \eqref{eq:9},
\begin{equation}\label{eq:8}
\PPpm(\text{$o$ is good}) \ge 1 - 4e^{-c_1\la r} - 4e^{-c_2\la r}.  
\end{equation}

\begin{figure}
	\includegraphics[width=0.6\textwidth]{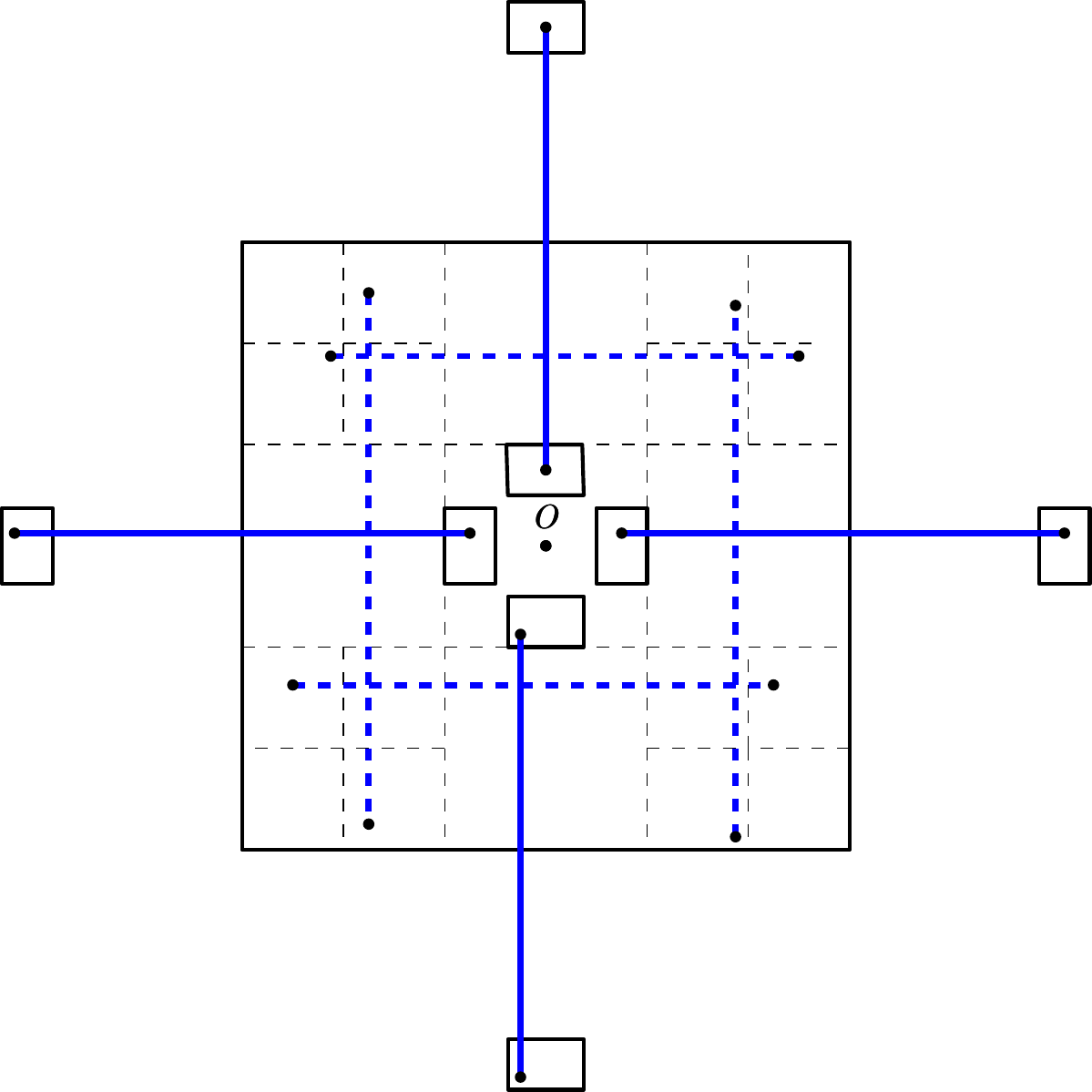}
	\caption{The event $C(o)$ is depicted in solid lines, and $A(o)$ in dashed lines. }
	\label{fig:good_event}
\end{figure}

Let  $x \in \ZZ^2$, and let $\tau_x$ be the translation on $\ZZ^2$ by $x$, so that
$\tau_x(y)=x+y$.
This induces a translation on $\Om_V$, also denoted $\tau_x$, by: 
for $\om=\{\om_v: v \in \ZZ^2\}\in\Om_V$, 
we have $\tau_x(\om)= \{\om_{v-x}: v \in \ZZ^2\}$.
The vertex $x \in \ZZ^2$ is declared \emph{good} if $o$ is good in the configuration
$\tau_{6rx}(\om)$, and we write $G_x$ for the event that $x$ is good.
By \eqref{eq:8},
\begin{equation}\label{eq:10}
\PPpm(G_x) \ge 1 - 4e^{-c_1\la r} - 4e^{-c_2\la r} , 
\qq x \in \ZZ^2.
\end{equation}

Let $d(u,v)$ denote the graph-theoretic 
distance from vertex $u$ to vertex $v$.
By examination of the definition of the events $G_x$, we see that their law 
is $1$-dependent, in that, for $U,V \subseteq \ZZ^2$, the families
$\{G_u: u \in U\}$, and $\{G_v: v\in V\}$ are independent
whenever $d(u,v) \ge 2$ for all $u\in U$, $v\in V$. 
By \cite[Thm 0.0]{LSS} (see also \cite[Thm 7.65]{G99}),
there exists $\pi<1$ such that: there exists almost surely an infinite cluster of
good vertices of $\ZZ^2$ whenever $\PPpm(G_0)\ge \pi$.

We choose $c>0$ such that
$1 - 4e^{-c_1\la r} - 4e^{-c_2\la r} > \pi$ whenever $\la r>c$.
By \eqref{eq:4}, there exists a.s.\ an infinite good cluster in the block lattice $6rx\ZZ^2$ if
$\la>c\log_2(1/q)$, which is to say that $q^c2^\la>1$.

By considering the geometry of the block events, we see that any cluster of
good vertices of $\ZZ^2$ gives rise to a cluster of blocks whose union contains 
a blue cluster intersecting every such block. The claim of
the theorem follows.
\end{proof}

\section*{Acknowledgements} 
NRB is supported by grant DE170100186 from the Australian Research Council.
MH is supported by Future Fellowship FT160100166 from the Australian Research Council.  The authors are grateful to a colleague and two referees for their comments.

%\bibliography{ppaths2}
\bibliographystyle{amsplain}

%\begin{comment}

\providecommand{\bysame}{\leavevmode\hbox to3em{\hrulefill}\thinspace}
\providecommand{\MR}{\relax\ifhmode\unskip\space\fi MR }
% \MRhref is called by the amsart/book/proc definition of \MR.
\providecommand{\MRhref}[2]{%
  \href{http://www.ams.org/mathscinet-getitem?mr=#1}{#2}
}
\providecommand{\href}[2]{#2}

%\end{comment}

\end{document}